\theoremstyle{plain}
  \newtheorem{thm}{Theorem}[section]
  \newtheorem{prop}[thm]{Proposition}
  \newtheorem{cor}[thm]{Corollary}
\theoremstyle{definition}
  \newtheorem{defn}[thm]{Definition}
  \newtheorem{rem}[thm]{Remark}
\DeclareMathAlphabet{\mathcal}{OMS}{cmsy}{m}{n}
\DeclareMathOperator{\dom}{dom}
\DeclareMathOperator{\cod}{cod}
\DeclareMathOperator{\id}{id}
\DeclareMathOperator{\ob}{ob}
\DeclareMathOperator{\sIm}{{\sf Im}}
\def\ps@pprintTitle{%
 \let\@oddhead\@empty
  \let\@evenhead\@empty
  \def\@oddfoot{\vbox{\hsize=\textwidth\footnotesize
  \vskip 8pt
  \copyright 2020. This manuscript version is made available under the CC-BY-NC-ND 4.0 license \url{https://creativecommons.org/licenses/by-nc-nd/4.0/}. The published version is available at \url{https://doi.org/10.1016/j.fss.2020.10.015}.\\
  }}%
  \let\@evenfoot\@oddfoot}
\def\oto{{\bfig\morphism<180,0>[\mkern-4mu`\mkern-4mu;]\place(86,0)[\circ]\efig}}
\def\rto{{\bfig\morphism<180,0>[\mkern-4mu`\mkern-4mu;]\place(82,0)[\mapstochar]\efig}}
\newcommand{\da}{\downarrow}
\newcommand{\ra}{\rightarrow}
\newcommand{\la}{\leftarrow}
\newcommand{\lra}{\longrightarrow}
\newcommand{\lda}{\swarrow}
\newcommand{\rda}{\searrow}
\newcommand{\bv}{\bigvee}
\newcommand{\bw}{\bigwedge}
\newcommand{\dv}{\dashv}
\newcommand{\nat}{\natural}
\renewcommand{\phi}{\varphi}
\newcommand{\al}{\alpha}
\newcommand{\be}{\beta}
\newcommand{\ga}{\gamma}
\newcommand{\lam}{\lambda}
\newcommand{\Om}{\Omega}
\newcommand{\ka}{\kappa}
\newcommand{\CC}{\mathcal{C}}
\newcommand{\CO}{\mathcal{O}}
\newcommand{\CQ}{\mathcal{Q}}
\newcommand{\sF}{{\sf F}}
\newcommand{\sI}{{\sf I}}
\newcommand{\sK}{{\sf K}}
\newcommand{\sP}{{\sf P}}
\newcommand{\sQ}{{\sf Q}}
\newcommand{\si}{a}
\newcommand{\sj}{b}
\newcommand{\sd}{{\sf d}}
\newcommand{\so}{{\sf o}}
\newcommand{\sy}{{\sf y}}
\newcommand{\cl}{{\sf cl}}
\newcommand{\BB}{{\bf B}}
\newcommand{\BD}{{\bf D}}
\newcommand{\FD}{\mathfrak{D}}
\newcommand{\Arr}{{\bf Arr}}
\newcommand{\Cat}{{\bf Cat}}
\newcommand{\ClsDist}{{\bf ClsDist}}
\newcommand{\ClsCloRel}{{\bf ClsRel}_{\cl}}
\newcommand{\CCat}{{\bf CCat}}
\newcommand{\Chu}{{\bf Chu}}
\newcommand{\ChuCon}{{\bf ChuCon}}
\newcommand{\Dist}{{\bf Dist}}
\newcommand{\Inf}{{\bf Inf}}
\newcommand{\Int}{{\bf Int}}
\newcommand{\IntDist}{{\bf IntDist}}
\newcommand{\IntRel}{{\bf IntRel}}
\newcommand{\TopRel}{{\bf TopRel}}
\newcommand{\Rel}{{\bf Rel}}
\newcommand{\Sup}{{\bf Sup}}
\newcommand{\QCat}{\CQ\text{-}\Cat}
\newcommand{\QCCat}{\CQ\text{-}\CCat}
\newcommand{\QDist}{\CQ\text{-}\Dist}
\newcommand{\QChu}{\CQ\text{-}\Chu}
\newcommand{\QInf}{\CQ\text{-}\Inf}
\newcommand{\QInt}{\CQ\text{-}\Int}
\newcommand{\QIntDist}{\CQ\text{-}\IntDist}
\newcommand{\QIntRel}{\CQ\text{-}\IntRel}
\newcommand{\QClsDist}{\CQ\text{-}\ClsDist}
\newcommand{\QClsCloDist}{(\QClsDist)_{\cl}}
\newcommand{\QRel}{\CQ\text{-}\Rel}
\newcommand{\QSup}{\CQ\text{-}\Sup}
\newcommand{\tphi}{\widetilde{\phi}}
\newcommand{\tpsi}{\widetilde{\psi}}
\newcommand{\hK}{\hat{\sK}}
\newcommand{\hI}{\hat{\sI}}
\newcommand{\hKd}{\hK_{\sd}}
\newcommand{\hId}{\hI_{\sd}}
\newcommand{\hIo}{\hI_{\so}}
\newcommand{\sPd}{\sP^{\dag}}
\newcommand{\co}{{\rm co}}
\newcommand{\op}{{\rm op}}
\newcommand{\PX}{\sP X}
\newcommand{\PY}{\sP Y}
\newcommand{\PdX}{\sPd X}
\newcommand{\BQ}{\BB(\CQ)}
\newcommand{\DQ}{\BD(\CQ)}
\newcommand{\ArrQ}{\Arr(\CQ)}
\renewcommand{\leq}{\leqslant}
\renewcommand{\geq}{\geqslant}
\newcommand{\RQDist}{\BD(\QDist)_{\rm reg}}
\newcommand{\QCD}{(\QSup)_{\rm ccd}}
\numberwithin{equation}{section}
\begin{document}

\begin{frontmatter}

%% Title, authors and addresses

%% use the tnoteref command within \title for footnotes;
%% use the tnotetext command for theassociated footnote;
%% use the fnref command within \author or \address for footnotes;
%% use the fntext command for theassociated footnote;
%% use the corref command within \author for corresponding author footnotes;
%% use the cortext command for theassociated footnote;
%% use the ead command for the email address,
%% and the form \ead[url] for the home page:
%% \title{Title\tnoteref{label1}}
%% \tnotetext[label1]{}
%% \author{Name\corref{cor1}\fnref{label2}}
%% \ead{email address}
%% \ead[url]{home page}
%% \fntext[label2]{}
%% \cortext[cor1]{}
%% \address{Address\fnref{label3}}
%% \fntext[label3]{}

\title{Diagonals between $\mathcal{Q}$-distributors}

%% use optional labels to link authors explicitly to addresses:
%% \author[label1,label2]{}
%% \address[label1]{}
%% \address[label2]{}

\author{Lili Shen}
\ead{shenlili@scu.edu.cn}

\address{School of Mathematics, Sichuan University, Chengdu 610064, China}

\begin{abstract}
%Diagonals of a category are morphisms of the Freyd completion of this category. For a small quantaloid $\mathcal{Q}$, it is shown that the category of $\mathcal{Q}$-distributors and diagonals is equivalent to a quotient category of the category of $\mathcal{Q}$-interior spaces and continuous $\mathcal{Q}$-distributors. Kan adjunctions induced by $\mathcal{Q}$-distributors play a crucial role in establishing this equivalence.
For a small quantaloid $\mathcal{Q}$, it is shown that the category of $\mathcal{Q}$-distributors and diagonals is equivalent to a quotient category of the category of $\mathcal{Q}$-interior spaces and continuous $\mathcal{Q}$-distributors. Kan adjunctions induced by $\mathcal{Q}$-distributors play a crucial role in establishing this equivalence.
\end{abstract}

\begin{keyword}
%% keywords here, in the form: keyword \sep keyword
Diagonal \sep Quantaloid \sep $\mathcal{Q}$-distributor \sep $\mathcal{Q}$-interior space \sep Continuous $\mathcal{Q}$-distributor \sep Kan adjunction
%% PACS codes here, in the form: \PACS code \sep code

%% MSC codes here, in the form: \MSC code \sep code
%% or \MSC[2008] code \sep code (2000 is the default)
\MSC[2020] 18D20 \sep 18D60 \sep 18A40
\end{keyword}

\end{frontmatter}

%\tableofcontents

%% \linenumbers

%% main text

\section{Introduction}

Given a \emph{(unital) quantale} \cite{Rosenthal1990} $\sQ$, the category $\BD(\sQ)$ of \emph{diagonals} \cite{Hoehle2011a,Pu2012,Stubbe2014} of $\sQ$ has been extensively studied in the fuzzy community; see, e.g., \cite{Li2017,Pu2012,Tao2014,Hoehle2015,GutierrezGarcia2018,He2019,Lai2020}. More specifically, $\BD(\sQ)$ is a \emph{quantaloid} \cite{Rosenthal1996,Stubbe2005,Stubbe2006,Stubbe2014}, and categories enriched in $\BD(\sQ)$ are precisely \emph{$\sQ$-preordered $\sQ$-subsets} \cite{GutierrezGarcia2018}. In particular, if $\Om$ is a frame, then \emph{symmetric} $\BD(\Om)$-categories are \emph{$\Om$-sets} \cite{Fourman1979}, and $\BD(\Om)$-categories are \emph{skew $\Om$-sets} \cite{Borceux1998}; if $[0,\infty]$ is the \emph{Lawvere quantale} \cite{Lawvere1973}, then $\BD[0,\infty]$-categories are \emph{(generalized) partial metric spaces} \cite{Matthews1994,Bukatin2009,Hoehle2011a,Pu2012,Stubbe2014}.

In fact, the construction of diagonals is well known in category theory. From each category $\CC$ we may construct the \emph{arrow category} $\Arr(\CC)$ \cite{MacLane1998} of $\CC$, whose objects are $\CC$-arrows and whose morphisms from $u$ to $v$ are pairs $(s:\dom u\to\dom v,\ t:\cod u\to\cod v)$ of $\CC$-arrows such that the square
$$\bfig
\square<500,400>[\bullet`\bullet`\bullet`\bullet;s`u`v`t]
\efig$$
is commutative. There is a congruence on $\Arr(\CC)$, given by $(s,t)\sim(s',t')$ if the commutative squares
$$\bfig
\square<500,400>[\bullet`\bullet`\bullet`\bullet;s`u`v`t]
\square(1000,0)<500,400>[\bullet`\bullet`\bullet`\bullet;s'`u`v`t']
\morphism(0,400)/-->/<500,-400>[\bullet`\bullet;]
\morphism(1000,400)/-->/<500,-400>[\bullet`\bullet;]
\efig$$
have the same \emph{diagonal}; that is, if $v\circ s=t\circ u=v\circ s'=t'\circ u$. The induced quotient category
$$\BD(\CC):=\Arr(\CC)/\!\sim$$
is precisely the \emph{Freyd completion} \cite{Freyd1966,Grandis2000,Grandis2002} of $\CC$. In a nutshell, the category of diagonals of a category $\CC$ is the Freyd completion of $\CC$, and it has received considerable attention in the realm of category theory as well \cite{Higgs1970,Fourman1979,Walters1981,Higgs1984,Shen2016a,Hofmann2018a,Tholen2019}.

\emph{Distributors} \cite{Benabou2000,Borceux1994a,Borceux1994b} (also \emph{profunctors} or \emph{bimodules}) generalize functors in the same way as relations generalize maps. For a \emph{small} quantaloid $\CQ$, the category $\QDist$ of $\CQ$-categories and $\CQ$-distributors is again a (large) quantaloid. The aim of this paper is to investigate the category
$$\BD(\QDist)$$
of diagonals of the quantaloid $\QDist$.

In order to explain the motivation of the results of this paper, let us recall that a \emph{Chu transform} \cite{Shen2015,Shen2016} (also \emph{infomorphism} \cite{Shen2014,Shen2013a})
$$(f,g):(\phi:X\oto Y)\to(\psi:X'\oto Y')$$
between $\CQ$-distributors is a pair of $\CQ$-functors $f:X\to X'$ and $g:Y'\to Y$, such that
\begin{equation} \label{Chu-graph}
\psi\circ f_{\nat}=g^{\nat}\circ\phi,\quad\text{or equivalently,}\quad\psi\lda f^{\nat}=g_{\nat}\rda\phi,
\end{equation}
where $f_{\nat}$ and $f^{\nat}$ are the \emph{graph} and \emph{cograph} of $f$, respectively, and $\lda$, $\rda$ are \emph{left} and \emph{right implications} in $\QDist$, respectively. The two equivalent characterizations of Chu transforms in \eqref{Chu-graph} allow us to extend the category $\QChu$ of $\CQ$-distributors and Chu transforms in two directions (see \cite[Proposition 3.2.1]{Shen2016a}):
\begin{equation} \label{QChu-Arr-D-ChuCon-B}
\bfig
\Atriangle/->`->`/<800,300>[\QChu`\Arr(\QDist)`\ChuCon(\QDist)^{\op};(\Box_{\nat},\Box^{\nat})`(\Box^{\nat},\Box_{\nat})`]
\square(0,-300)/`->`->`/<1600,300>[\Arr(\QDist)`\ChuCon(\QDist)^{\op}`\BD(\QDist)`\BB(\QDist)^{\op};```]
\efig
\end{equation}
In the above diagram, $\ChuCon(\QDist)$ and $\BB(\QDist)$, called the categories of \emph{Chu connections} and \emph{back diagonals} \cite{Shen2016a} of $\QDist$, dualize the constructions of $\Arr(\QDist)$ and $\BD(\QDist)$, respectively. In fact, all the categories in \eqref{QChu-Arr-D-ChuCon-B}, expect $\QChu$, are actually quantaloids. It is already known that
\begin{itemize}
\item $\BB(\QDist)$ is dually equivalent to the quantaloid $\QSup$($=\QCCat$) of separated complete $\CQ$-categories and left adjoint $\CQ$-functors \cite{Shen2016a}, and
\item $\QSup$ is dually equivalent to the quantaloid $\QClsCloDist$ of $\CQ$-closure spaces and closed continuous $\CQ$-distributors \cite{Shen2016b}.
\end{itemize}
Hence, the combination of the main results of \cite{Shen2016b,Shen2016a} renders equivalences of quantaloids
\begin{equation} \label{BQDist-QSup-QClsCloDist}
\BB(\QDist)^{\op}\simeq\QSup\simeq\QClsCloDist^{\op},
\end{equation}
which unveil the categorical and topological nature of \emph{back diagonals between $\CQ$-distributors}.

Since $\BB(\QDist)$ may be considered as a dualization of $\BD(\QDist)$ (cf. \cite[Subsection 1.1]{Shen2016a}), it is natural to ask whether similar equivalences of categories could be established for $\BD(\QDist)$. In other words, is it possible to find any categorical or topological interpretation of \emph{diagonals between $\CQ$-distributors}?

Unfortunately, if we take a closer look at the difference between $\BD(\QDist)$ and $\BB(\QDist)$, we would see that neither $\QSup$ nor its dual construction $\QInf$ (the category of separated complete $\CQ$-categories and right adjoint $\CQ$-functors) can be (dually) equivalent to $\BD(\QDist)$:
\begin{enumerate}[label=(\arabic*)]
\item \label{DQDist-QSup-why-1} The canonical functor $\ChuCon(\QDist)^{\op}\to\QSup$ that leads to the equivalence $\BB(\QDist)^{\op}\simeq\QSup$ (see \cite[Proposition 3.3.1]{Shen2016a}) is constructed through fixed points of \emph{Isbell adjunctions} \cite{Shen2013a}, while the parallel functor $\Arr(\QDist)^{\op}\to\QSup$ (see \cite[Proposition 5.1]{Lai2018a}) is constructed through fixed points of \emph{Kan adjunctions} \cite{Shen2013a}.
\item \label{DQDist-QSup-why-2} In the special case that $\sQ$ is a commutative and integral quantale, it is already known from \cite{Lai2009} that every complete $\sQ$-category is isomorphic to the $\sQ$-category of fixed points of some Isbell adjunction, but it does not hold for Kan adjunction. Indeed, \cite[Theorem 5.3]{Lai2009} actually states that every complete $\sQ$-category is isomorphic to the $\sQ$-category of fixed points of some Kan adjunction if, and only if, $\sQ$ is a \emph{Girard quantale} \cite{Rosenthal1990,Yetter1990}.
\end{enumerate}
As a result of \ref{DQDist-QSup-why-1} and \ref{DQDist-QSup-why-2}, the canonical functor from $\BD(\QDist)$ to $\QSup$ (or $\QInf$) cannot be essentially surjective on objects, and thus it cannot be an equivalence of categories. As a compromise, it is proved in \cite[Theorem 5.5]{Lai2018a} that there is an equivalence of quantaloids
\begin{equation} \label{RQDist-QCD}
\RQDist^{\op}\simeq\QCD,
\end{equation}
where $\RQDist$ is the full subquantaloid of $\BD(\QDist)$ with objects restricting to \emph{regular} $\CQ$-distributors, and $\QCD$ is the full subquantaloid of $\QSup$ with objects restricting to \emph{completely distributive} $\CQ$-categories. However, the equivalence \eqref{RQDist-QCD} cannot be extended to $\BD(\QDist)^{\op}$ and $\QSup$.

In spite of the difficulty in searching for the \emph{categorical} meaning of diagonals through complete $\CQ$-categories, in this paper we manage to present a \emph{topological} interpretation of diagonals via \emph{$\CQ$-interior spaces}. As a dual notion of $\CQ$-closure space \cite{Shen2016b}, a $\CQ$-interior space $(X,\si)$ is defined as a $\CQ$-category $X$ equipped with a \emph{$\CQ$-interior operator} \cite{Shen2013a} $\si$ on its presheaf $\CQ$-category $\PX$. By constructing an adjunction
$$\sK\dv\sI:\QInt\to\QChu,$$
it is shown in Section \ref{Q-Int} that the category $\QInt$ of $\CQ$-interior spaces and \emph{continuous $\CQ$-functors} is a retract and coreflective subcategory of $\QChu$ (Theorem \ref{I-dv-K}). Explicitly, the functor $\sK$ sends each $\CQ$-distributor $\phi:X\oto Y$ to the $\CQ$-interior space $(X,\phi^*\phi_*)$, where
$$\phi^*\dv\phi_*:\PX\to\PY$$
is the \emph{Kan adjunction} \cite{Shen2013a} induced by $\phi$.

Since the continuity of a $\CQ$-functor between $\CQ$-interior spaces is completely determined by its graph, it is natural to formulate the notion of \emph{continuous $\CQ$-distributor}; see Definition \ref{cont-dist-def}. In Section \ref{Q-IntDist} we construct a full functor
$$\hK:\Arr(\QDist)\to\QIntDist,$$
which coincides with $\sK$ on objects and has a right inverse $\hI:\QIntDist\to\Arr(\QDist)$; hence, $\QIntDist$ is a retract of $\Arr(\QDist)$ (Proposition \ref{hK-hI-id}).

In fact, $\QIntDist$ is a quantaloid, and there is a congruence on $\QIntDist$, given by $\zeta\sim\zeta':(X,\si)\oto(Y,\sj)$ if
$$\zeta^*\sj=\zeta'^*\sj,$$
which intuitively identifies ``continuous maps that are indistinguishable by preimages of open sets'', and we denote the quotient quantaloid by $(\QIntDist)_{\so}$. The main result of this paper, Theorem \ref{hKd-hId-equiv}, gives an equivalence of quantaloids
\begin{equation} \label{DQDist-QIntDisto}
\BD(\QDist)\simeq(\QIntDist)_{\so}.
\end{equation}
Therefore, from the topological point of view, we may conclude that a diagonal between $\CQ$-distributors is essentially an equivalence class of continuous $\CQ$-distributors between $\CQ$-interior spaces.

Moreover, we establish the discrete version of the equivalence \eqref{DQDist-QIntDisto} in Section \ref{Q-IntRel} (Theorem \ref{DRel-QIntRelo-equiv}), which is in particular applied to (classical) interior spaces and topological spaces (Corollaries \ref{Rel-IntRelo-equiv} and \ref{Relf-TopRelo-equiv}). Finally, in Section \ref{Q-Girard} we discuss a special case, i.e., when $\CQ$ is a Girard quantaloid. In this case, we have an isomorphism of quantaloids
$$\BD(\QDist)\cong\BB(\QDist)$$
by Propositions \ref{Girard-Arr-ChuCon} and \ref{QDist-Girard}. Consequently, the equivalences \eqref{BQDist-QSup-QClsCloDist} and \eqref{DQDist-QIntDisto} are combined to
$$\BD(\QDist)\simeq\BB(\QDist)\simeq(\QIntDist)_{\so}\simeq\QClsCloDist\simeq(\QSup)^{\op}$$
if $\CQ$ is Girard (Theorem \ref{Girard-equiv}).

\section{Diagonals of a quantaloid} \label{Diagonals}

A \emph{quantaloid} \cite{Rosenthal1996} is a category enriched in the symmetric monoidal closed category $\Sup$. Explicitly, a quantaloid $\CQ$ is a (possibly large) 2-category with its 2-cells given by order, such that each hom-set $\CQ(p,q)$ is a complete lattice and the composition $\circ$ of $\CQ$-arrows preserves joins on both sides, i.e.,
$$v\circ\Big(\bv_{i\in I} u_i\Big)=\bv_{i\in I}v\circ u_i,\quad\Big(\bv_{i\in I} v_i\Big)\circ u=\bv_{i\in I}v_i\circ u$$
for all $\CQ$-arrows $u,u_i:p\to q$ and $v,v_i:q\to r$ $(i\in I)$. Hence, $\CQ$ has ``internal homs'', denoted by $\lda$ and $\rda\,$, as the right adjoints of the composition maps:
$$(-\circ u)\dv(-\lda u):\CQ(p,r)\to\CQ(q,r)\quad\text{and}\quad (v\circ -)\dv(v\rda -):\CQ(p,r)\to\CQ(p,q);$$
explicitly,
$$v\circ u\leq w\iff v\leq w\lda u\iff u\leq v\rda w$$
for all $\CQ$-arrows $u:p\to q$, $v:q\to r$, $w:p\to r$.

A \emph{homomorphism} of quantaloids is a functor between the underlying categories that preserves joins of arrows. A homomorphism of quantaloids is \emph{full} (resp. \emph{faithful}, an \emph{equivalence} of quantaloids, an \emph{isomorphism} of quantaloids) if the underlying functor is full (resp. faithful, an equivalence of underlying categories, an isomorphism of underlying categories).

Each quantaloid $\CQ$ induces an arrow category $\ArrQ$ of $\CQ$ with $\CQ$-arrows as objects and pairs ($s:p\to p',\ t:q\to q'$) of $\CQ$-arrows satisfying
$$v\circ s=t\circ u$$
$$\bfig
\square<500,400>[p`p'`q`q';s`u`v`t]
\efig$$
as arrows from $u:p\to q$ to $v:p'\to q'$. $\ArrQ$ is again a quantaloid with the componentwise local order inherited from $\CQ$.

A \emph{congruence} $\vartheta$ on a quantaloid $\CQ$ consists of a family of equivalence relations $\vartheta_{p,q}$ on each hom-set $\CQ(p,q)$ that is compatible with compositions and joins of $\CQ$-arrows, i.e., $(v\circ u,v'\circ u')\in\vartheta_{p,r}$ whenever $(u,u')\in\vartheta_{p,q}$, $(v,v')\in\vartheta_{q,r}$ and $\Big(\displaystyle\bv_{i\in I}u_i,\displaystyle\bv_{i\in I}u'_i\Big)\in\vartheta_{p,q}$ whenever $(u_i,u'_i)\in\vartheta_{p,q}$ for all $i\in I$.

Each congruence $\vartheta$ on $\CQ$ induces a \emph{quotient quantaloid} $\CQ/\vartheta$ equipped with the same objects as $\CQ$. Compositions and joins of arrows in $\CQ/\vartheta$ are clearly well defined, and the obvious quotient functor
$$\CQ\to\CQ/\vartheta$$
is a full quantaloid homomorphism.

For arrows $(s,t),(s',t'):u\to v$ in $\ArrQ$, we denote by $(s,t)\sim(s',t')$ if the commutative squares
$$\bfig
\square<500,400>[p`p'`q`q';s`u`v`t]
\square(1000,0)<500,400>[p`p'`q`q';s'`u`v`t']
\morphism(0,400)/-->/<500,-400>[p`q';]
\morphism(1000,400)/-->/<500,-400>[p`q';]
\efig$$
have the same \emph{diagonal}; that is, if
$$v\circ s=t\circ u=v\circ s'=t'\circ u.$$
``$\sim$'' gives rise to a congruence on $\ArrQ$; the induced quotient quantaloid, denoted by
$$\DQ:=\ArrQ/\!\sim,$$
is called the quantaloid of \emph{diagonals} \cite{Stubbe2014} of $\CQ$, which is precisely the \emph{Freyd completion} \cite{Freyd1966,Grandis2000,Grandis2002} of $\CQ$.

\section{Quantaloid-enriched categories and their distributors} \label{Q-Categories}

From now on, we let $\CQ$ be a \emph{small} quantaloid, i.e., $\ob\CQ$ is a set. A \emph{$\CQ$-category} \cite{Rosenthal1996,Stubbe2005} consists of a set $X$ over $\ob\CQ$, i.e., a set $X$ equipped with a \emph{type} map $|\text{-}|:X\to\ob\CQ$, and hom-arrows $\al(x,y)\in\CQ(|x|,|y|)$, such that
$$1_{|x|}\leq\al(x,y)\quad\text{and}\quad\al(y,z)\circ\al(x,y)\leq\al(x,z)$$
for all $x,y,z\in X$. If $(X,\al)$ is a $\CQ$-category, each subset $Y\subseteq X$ is equipped with the (full) \emph{$\CQ$-subcategory} structure inherited from $\al$.

Each $\CQ$-category $(X,\al)$ is endowed with an underlying (pre)order given by
$$x\leq y\iff|x|=|y|\ \text{and}\ 1_{|x|}\leq\al(x,y),$$
and we write $x\cong y$ if $x\leq y$ and $y\leq x$. We say that $(X,\al)$ is \emph{separated} (or \emph{skeletal}) if $x=y$ whenever $x\cong y$ in $X$.

A \emph{$\CQ$-functor} (resp. \emph{fully faithful} $\CQ$-functor) $f:(X,\al)\to(Y,\be)$ between $\CQ$-categories is a map $f:X\to Y$ such that
$$|x|=|fx|\quad\text{and}\quad\al(x,y)\leq\be(fx,fy)\quad(\text{resp.}\ \al(x,y)=\be(fx,fy))$$
for all $x,y\in X$. With the pointwise order of $\CQ$-functors given by
$$f\leq g:(X,\al)\to(Y,\be)\iff\forall x\in X:\ fx\leq gx,$$
$\CQ$-categories and $\CQ$-functors constitute a locally ordered 2-category $\QCat$. A pair of $\CQ$-functors $f:(X,\al)\to(Y,\be)$, $g:(Y,\be)\to(X,\al)$ forms an adjunction $f\dv g$ in $\QCat$ if $\be(fx,y)=\al(x,gy)$ for all $x\in X$, $y\in Y$. %indeed, adjoint $\CQ$-functors are exactly adjoint 1-cells in $\QCat$.

A \emph{$\CQ$-distributor} $\phi:(X,\al)\oto(Y,\be)$ between $\CQ$-categories is a map that assigns to each pair $(x,y)\in X\times Y$ a $\CQ$-arrow $\phi(x,y)\in\CQ(|x|,|y|)$, such that $$\be(y,y')\circ\phi(x,y)\circ\al(x',x)\leq\phi(x',y')$$
for all $x,x'\in X$, $y,y'\in Y$. With the pointwise order inherited from $\CQ$, the locally ordered 2-category $\QDist$ of $\CQ$-categories and $\CQ$-distributors becomes a (large) quantaloid in which
\begin{align*}
&\psi\circ\phi:(X,\al)\oto(Z,\ga),\quad(\psi\circ\phi)(x,z)=\bv_{y\in Y}\psi(y,z)\circ\phi(x,y),\\
&\xi\lda\phi:(Y,\be)\oto(Z,\ga),\quad(\xi\lda\phi)(y,z)=\bw_{x\in X}\xi(x,z)\lda\phi(x,y),\\
&\psi\rda\xi:(X,\al)\oto(Y,\be),\quad (\psi\rda\xi)(x,y)=\bw_{z\in Z}\psi(y,z)\rda\xi(x,z)
\end{align*}
for all $\CQ$-distributors $\phi:(X,\al)\oto(Y,\be)$, $\psi:(Y,\be)\oto(Z,\ga)$, $\xi:(X,\al)\oto(Z,\ga)$; the identity $\CQ$-distributor on $(X,\al)$ is given by its hom $\al:(X,\al)\oto(X,\al)$. %Adjoint $\CQ$-distributors are exactly adjoint 1-cells in the quantaloid $\QDist$.

Each $\CQ$-functor $f:(X,\al)\to(Y,\be)$ induces an adjunction $f_{\nat}\dv f^{\nat}$ in $\QDist$ (i.e., $\al\leq f^{\nat}\circ f_{\nat}$ and $f_{\nat}\circ f^{\nat}\leq\be$) given by
\begin{align*}
&f_{\nat}:(X,\al)\oto(Y,\be),\quad f_{\nat}(x,y)=\be(fx,y),\\
&f^{\nat}:(Y,\be)\oto(X,\al),\quad f^{\nat}(y,x)=\be(y,fx),
\end{align*}
called the \emph{graph} and \emph{cograph} of $f$, respectively. Obviously, the identity $\CQ$-distributor $\al:(X,\al)\oto(X,\al)$ is the cograph of the identity $\CQ$-functor $1_X:X\to X$. Hence, in what follows
$$1_X^{\nat}=\al$$
will be our standard notation for the hom of a $\CQ$-category $X=(X,\al)$ if no confusion arises. It is easy to see that
\begin{equation} \label{f-leq-g-graph}
f\leq g:X\to Y\iff g_{\nat}\leq f_{\nat}:X\oto Y\iff f^{\nat}\leq g^{\nat}:Y\oto X,
\end{equation}
and thus both the graphs and cographs of $\CQ$-functors are 2-functorial as
$$(-)_{\nat}:(\QCat)^{\co}\to\QDist,\quad(-)^{\nat}:(\QCat)^{\op}\to\QDist,$$
where ``$\co$'' refers to the dualization of 2-cells.

For each $q\in\ob\CQ$, let $\{q\}$ denote the one-object $\CQ$-category whose only object has type $q$ and hom $1_q$. $\CQ$-distributors of the form $\mu:X\oto\{q\}$ are called \emph{presheaves} (of type $q$) on $X$, which constitute a separated $\CQ$-category $\PX$ with
$$1_{\PX}^{\nat}(\mu,\mu')=\mu'\lda\mu$$
for all $\mu,\mu'\in\PX$. The \emph{Yoneda embedding}
$$\sy_X:X\to\PX,\ x\mapsto 1_X^{\nat}(-,x)$$
is clearly a fully faithful $\CQ$-functor. %The \emph{Yoneda lemma} \cite{Stubbe2005} says that
%\begin{equation} \label{Yoneda}
%\mu=1_{\PX}^{\nat}(\sy_X-,\mu)=(\sy_X)_{\nat}(-,\mu)
%\end{equation}
%for all $\mu\in\PX$.

Each $\CQ$-distributor $\phi:X\oto Y$ induces a \emph{Kan adjunction} \cite{Shen2013a}
$$\phi^*\dv\phi_*:\PX\to\PY$$
in $\QCat$ with
$$\phi^*\lam=\lam\circ\phi\quad\text{and}\quad\phi_*\mu=\mu\lda\phi$$
for all $\lam\in\PY$, $\mu\in\PX$. Moreover, $(-)^*:\QDist\to(\QCat)^{\op}$ is 2-functorial and left adjoint to $(-)^{\nat}:(\QCat)^{\op}\to\QDist$ \cite{Heymans2010}, which gives rise to isomorphisms
$$\QDist(X,Y)\cong\QCat(Y,\PX)$$
for all $\CQ$-categories $X$, $Y$, and we denote by
\begin{equation} \label{tphi-def}
\tphi:Y\to\PX,\quad \tphi y=\phi(-,y)
\end{equation}
the transpose of each $\CQ$-distributor $\phi:X\oto Y$. It is easy to see that
\begin{equation} \label{tphi-Yoneda}
\tphi=\phi^*\sy_Y.
\end{equation}
In particular, each $\CQ$-functor $f:X\to Y$ induces an adjunction $f^{\ra}\dv f^{\la}$ in $\QCat$ with
\begin{equation} \label{fra-fla-def}
f^{\ra}:=(f^{\nat})^*:\PX\to\PY\quad\text{and}\quad f^{\la}:=(f_{\nat})^*=(f^{\nat})_*:\PY\to\PX,
\end{equation}
where $(f_{\nat})^*=(f^{\nat})_*$ may be easily verified by routine calculation.

\section{$\CQ$-interior spaces} \label{Q-Int}

A \emph{$\CQ$-interior space} is a pair $(X,\si)$ that consists of a $\CQ$-category $X$ and a $\CQ$-interior operator \cite{Shen2013a} $\si$ on $\PX$; that is, a $\CQ$-functor $\si:\PX\to\PX$ with
\begin{equation} \label{int-def}
\si\leq 1_{\PX}\quad\text{and}\quad\si\si=\si,
\end{equation}
where we write $\si\si=\si$ instead of $\si\si\cong\si$ because the presheaf $\CQ$-category $\PX$ is separated. We denote by
$$\CO(X,\si):=\{\mu\in\PX\mid\si\mu=\mu\}$$
the $\CQ$-subcategory of $\PX$ consisting of \emph{open} presheaves of $(X,\si)$.

\begin{rem} \label{int-TAC}
The definition of $\CQ$-interior space here deviates from that of \cite{Lai2017b}, in which a $\CQ$-interior space is defined as a pair $(X,c)$, with $c$ being a \emph{$\CQ$-closure operator} on the \emph{copresheaf $\CQ$-category} $\PdX$ of $X$. In the case that $\CQ$ is a \emph{commutative quantale} \cite{Rosenthal1990} and $X$ is a \emph{discrete} $\CQ$-category (i.e., a set), $\CQ$-interior operators on $\PX$ are essentially the same as $\CQ$-closure operators on $\PdX$; however, it should be noted that they may not coincide when $\CQ$ is a general quantaloid. %Hence, the notion of $\CQ$-interior space presented here is a different categorical extension of its classical version other than \cite{Lai2017b}.
\end{rem}

A \emph{continuous $\CQ$-functor} $f:(X,\si)\to(Y,\sj)$ between $\CQ$-interior spaces is a $\CQ$-functor $f:X\to Y$ such that
$$f^{\la}\sj\leq \si f^{\la}:\PY\to\PX.$$
$\CQ$-interior spaces and continuous $\CQ$-functors constitute a 2-category $\QInt$, with the local order inherited from $\QCat$.

The following proposition shows that continuous $\CQ$-functors may be characterized as preimages of open presheaves staying open, and we will prove its generalized version in the next section (see Proposition \ref{cont-dist}):

\begin{prop} \label{cont-functor}
Let $(X,\si)$, $(Y,\sj)$ be $\CQ$-interior spaces. For each $\CQ$-functor $f:X\to Y$, the following statements are equivalent:
\begin{enumerate}[label={\rm(\roman*)}]
\item \label{cont-functor:def} $f:(X,\si)\to(Y,\sj)$ is a continuous $\CQ$-functor.
\item \label{cont-functor:l} $f^{\la}\sj\leq\si f^{\la}\sj$, thus $f^{\la}\sj=\si f^{\la}\sj$; that is, $f^{\la}\lam\in\CO(X,\si)$ whenever $\lam\in\CO(Y,\sj)$.
\item \label{cont-functor:r} $\sj(f_{\nat})_*\leq\sj(f_{\nat})_*\si$, thus $\sj(f_{\nat})_*=\sj(f_{\nat})_*\si$.
\end{enumerate}
\end{prop}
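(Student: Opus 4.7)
The plan is to establish the three equivalences by separately proving (i)$\Leftrightarrow$(ii) and (i)$\Leftrightarrow$(iii). In each of (ii) and (iii), the ``thus''-equality upgrading the stated inequality is automatic: in (ii) from $\si \leq 1_{\PX}$ applied pointwise, and in (iii) from $\si \leq 1_{\PX}$ combined with monotonicity of the $\CQ$-functors $(f_{\nat})_*$ and $\sj$. So in each case only one inequality has to be verified.

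The equivalence (i)$\Leftrightarrow$(ii) is elementary and uses only the idempotence $\sj\sj=\sj$ and the inequality $\sj \leq 1_{\PY}$. To get (ii) from (i), I would substitute $\sj\lam$ for $\lam$ in $f^{\la}\sj \leq \si f^{\la}$ and collapse $f^{\la}\sj\sj\lam$ back to $f^{\la}\sj\lam$ on the left. Conversely, from (ii) one obtains $f^{\la}\sj \leq \si f^{\la}\sj \leq \si f^{\la}$, the last inequality coming from $\sj \leq 1_{\PY}$ together with monotonicity of the composition $\si f^{\la}$.

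The substantive implication is (i)$\Leftrightarrow$(iii), which I would handle by a mate argument built on the Kan adjunction $f^{\la}\dv(f_{\nat})_*$. Assuming (i), I would instantiate it at $(f_{\nat})_*\mu$ for $\mu\in\PX$ and apply the counit $f^{\la}(f_{\nat})_* \leq 1_{\PX}$ (with $\si$ monotone) to obtain $f^{\la}\sj(f_{\nat})_*\mu \leq \si\mu$; transposing across the adjunction then yields $\sj(f_{\nat})_*\mu \leq (f_{\nat})_*\si\mu$, and post-composing with $\sj$ (using $\sj\sj=\sj$) produces (iii). For the reverse direction, (iii) combined with $\sj \leq 1_{\PY}$ gives $\sj(f_{\nat})_* \leq (f_{\nat})_*\si$; transposing gives $f^{\la}\sj(f_{\nat})_*\mu \leq \si\mu$ for every $\mu$, and setting $\mu = f^{\la}\lam$ together with the unit $1_{\PY} \leq (f_{\nat})_*f^{\la}$ (and monotonicity of $\sj$, $f^{\la}$) delivers $f^{\la}\sj\lam \leq f^{\la}\sj(f_{\nat})_*f^{\la}\lam \leq \si f^{\la}\lam$, which is (i). The one bookkeeping point to watch is that $f^{\la}$ carries two distinct adjunctions, namely $f^{\ra}\dv f^{\la}$ and $f^{\la}\dv(f_{\nat})_*$, and it is the latter that is needed here; once that is pinned down, the entire argument reduces to routine mate calculus with units, counits, and monotonicity.
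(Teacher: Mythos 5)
Your proposal is correct and uses essentially the same ingredients as the paper: the paper defers the proof to its generalized version, Proposition \ref{cont-dist}, for an arbitrary $\CQ$-distributor $\zeta$ in place of $f_{\nat}$, and proves it by exactly the unit/counit manipulations for the Kan adjunction $\zeta^*\dv\zeta_*$ (which for $\zeta=f_{\nat}$ is your $f^{\la}\dv(f_{\nat})_*$) together with $\sj\sj=\sj$ and $\si\leq 1_{\PX}$, $\sj\leq 1_{\PY}$. The only cosmetic difference is organizational --- you prove (i)$\Leftrightarrow$(ii) and (i)$\Leftrightarrow$(iii) separately and pointwise, while the paper runs the cycle (i)$\Rightarrow$(ii)$\Rightarrow$(iii)$\Rightarrow$(i) via compact inequality chains --- and your remark pinning down which of the two adjunctions carried by $f^{\la}$ is being used is exactly the right point to watch.
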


Recall that a \emph{Chu transform} \cite{Shen2015,Shen2016} (called \emph{infomorphism} in \cite{Shen2014,Shen2013a})
$$(f,g):(\phi:X\oto Y)\to(\psi:X'\oto Y')$$
between $\CQ$-distributors is a pair of $\CQ$-functors $f:X\to X'$ and $g:Y'\to Y$, such that $\psi\circ f_{\nat}=g^{\nat}\circ\phi$, or equivalently, $\psi(f-,-)=\phi(-,g-)$.
$$\bfig
\square/->`->`<-`->/<500,400>[X`Y`X'`Y';\phi`f`g`\psi]
\place(250,0)[\circ]
\place(250,400)[\circ]
\square(1000,0)<500,400>[X`Y`X'`Y';\phi`f_\nat`g^\nat`\psi]
\place(1250,0)[\circ]
\place(1250,400)[\circ]
\place(1000,200)[\circ]
\place(1500,200)[\circ]
\efig$$
With Chu transforms being ordered as
$$(f,g)\leq(f',g'):\phi\to\psi\iff f\leq f'\ \text{and}\ g\geq g',$$
we obtain a locally ordered 2-category $\QChu$ of $\CQ$-distributors and Chu transforms.

Note that the Kan adjunction $\phi^*\dv\phi_*:\PX\to\PY$ induced by each $\CQ$-distributor $\phi:X\oto Y$ gives rise to a $\CQ$-interior operator $\phi^*\phi_*:\PX\to\PX$, and thus to a $\CQ$-interior space $(X,\phi^*\phi_*)$. The assignment
$$(\phi:X\oto Y)\mapsto(X,\phi^*\phi_*)$$
is in fact functorial from $\QChu$ to $\QInt$:

\begin{prop} \label{K-functor}
Let $(f,g):\phi\to\psi$ be a Chu transform between $\CQ$-distributors $\phi:X\oto Y$ and $\psi:X'\oto Y'$. Then $f:(X,\phi^*\phi_*)\to(X',\psi^*\psi_*)$ is a continuous $\CQ$-functor.
\end{prop}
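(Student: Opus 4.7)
My plan is to verify continuity via the equivalent characterization in Proposition~\ref{cont-functor}\ref{cont-functor:l}: it suffices to show that $f^{\la}\lam\in\CO(X,\phi^*\phi_*)$ for every open presheaf $\lam\in\CO(X',\psi^*\psi_*)$.

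First I would record a general feature of Kan adjunctions that will be used on both sides. Because $\phi^*\dv\phi_*$ is an adjunction, the triangle identity yields $\phi^*\phi_*\phi^*=\phi^*$, so the idempotent interior operator $\phi^*\phi_*$ has as its fixed points precisely the image of $\phi^*$; the same observation applies to $\psi^*\psi_*$. In particular, every open $\lam\in\CO(X',\psi^*\psi_*)$ has the form $\lam=\psi^*\eta$ for some $\eta\in\sP Y'$ (concretely $\eta=\psi_*\lam$), and to conclude that $f^{\la}\lam$ is open in $(X,\phi^*\phi_*)$ it suffices to exhibit $f^{\la}\lam$ as a value of $\phi^*$.

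Next I would translate the Chu-transform condition $\psi\circ f_{\nat}=g^{\nat}\circ\phi$ through the contravariant $2$-functor $(-)^*:\QDist\to(\QCat)^{\op}$. Using the definitions $f^{\la}=(f_{\nat})^*$ and $g^{\ra}=(g^{\nat})^*$ from \eqref{fra-fla-def}, together with the contravariance $(\xi\circ\chi)^*=\chi^*\xi^*$, the Chu-transform condition becomes the commutation identity
$$f^{\la}\psi^*=\phi^*g^{\ra}:\sP Y'\to\sP X.$$
For an arbitrary open $\lam=\psi^*\eta$ I would then compute $f^{\la}\lam=f^{\la}\psi^*\eta=\phi^*g^{\ra}\eta$, which lies in the image of $\phi^*$ and is therefore open in $(X,\phi^*\phi_*)$, completing the verification.

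The only real difficulty is bookkeeping: one must keep straight the variances of the graph $(-)_{\nat}$, the cograph $(-)^{\nat}$ and the functor $(-)^*$, and the direction of each Kan adjunction $\phi^*\dv\phi_*$ at the presheaf level. Once these are aligned, the heart of the argument is the single contravariant identity $f^{\la}\psi^*=\phi^*g^{\ra}$ extracted from the Chu-transform condition; everything else is immediate from the fact that the fixed points of $\phi^*\phi_*$ coincide with the image of $\phi^*$.
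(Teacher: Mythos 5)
Your proof is correct, but it takes a genuinely different route from the paper's. The paper proves the inequality $f^{\la}\psi^*\psi_*\leq\phi^*\phi_*f^{\la}$ directly by pasting two squares: the commutative square $f^{\la}\psi^*=\phi^*g^{\ra}$ (from applying $(-)^*$ to the Chu condition, exactly as you do) together with a separate lax square $g^{\ra}\psi_*\leq\phi_*f^{\la}$, the latter established by a residuation computation (carried out in the proof of Proposition \ref{hK-functor} and transported back by substituting $f_{\nat}$, $g^{\nat}$ for $\zeta$, $\eta$). You instead verify criterion \ref{cont-functor:l} of Proposition \ref{cont-functor} and exploit the triangle identity $\phi^*\phi_*\phi^*=\phi^*$ to identify $\CO(X,\phi^*\phi_*)$ with $\sIm\phi^*$; then the single identity $f^{\la}\psi^*=\phi^*g^{\ra}$ immediately places $f^{\la}\lam=\phi^*g^{\ra}\psi_*\lam$ in $\sIm\phi^*$. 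This buys you a shorter argument that dispenses with the left-square inequality altogether, at the cost of leaning on Proposition \ref{cont-functor} (stated before Proposition \ref{K-functor} but proved only later in generalized form as Proposition \ref{cont-dist}, so this is not circular) and on the observation that the fixed points of the induced interior operator are exactly the image of $\phi^*$ --- an observation the paper only makes explicitly later, in Remark \ref{Pratt}. The paper's two-square formulation has the advantage of generalizing verbatim to arbitrary commutative squares of $\CQ$-distributors in Proposition \ref{hK-functor}, which is presumably why it is organized that way; your argument also generalizes to that setting, since it only uses functoriality of $(-)^*$ and the Kan adjunctions.
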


\begin{proof}
In order to prove $f^{\la}\psi^*\psi_*\leq\phi^*\phi_* f^{\la}$, let us consider the following diagram:
$$\bfig
\iiixii|aalrrbb|<700,500>[\PX'`\PY'`\PX'`\PX`\PY`\PX;\psi_*`\psi^*`f^{\la}`g^{\ra}`f^{\la}`\phi_*`\phi^*]
\place(350,250)[\geq]
\efig$$
Since $f^{\ra}=(f_{\nat})^*$ and $g^{\ra}=(g^{\nat})^*$, one may check that the right square is commutative and that $g^{\ra}\psi_*\leq\phi_* f^{\la}$ similarly as in the proof of Proposition \ref{hK-functor}, by just trading $\zeta$ and $\eta$ there for $f_{\nat}$ and $g^{\nat}$, respectively. The details are left to the readers.
%Since $(f,g):\phi\to\psi$ is a Chu transform and $f^{\ra}=(f_{\nat})^*$, $g^{\ra}=(g^{\nat})^*$, the commutativity of the right square follows immediately from the functoriality of $(-)^*:(\QDist)^{\op}\to\QCat$, and it remains to prove $g^{\ra}\psi_*\leq\phi_* f^{\la}$. Indeed,
%\begin{align*}
%g^{\ra}\psi_*\mu'&=(\mu'\lda\psi)\circ g^{\nat}\\
%&\leq((\mu'\lda\psi)\circ g^{\nat}\circ\phi)\lda\phi\\
%&=((\mu'\lda\psi)\circ\psi\circ f_{\nat})\lda\phi\\
%&\leq(\mu'\circ f_{\nat})\lda\phi\\
%&=\phi_* f^{\la}\mu'
%\end{align*}
%for all $\mu'\in\PX'$, and thus the conclusion follows.
\end{proof}

From Proposition \ref{K-functor} we obtain a 2-functor
$$\sK:\QChu\to\QInt$$
that sends each Chu transform $(f,g):(\phi:X\oto Y)\to(\psi:X'\oto Y')$ to the continuous $\CQ$-functor 
$$f:(X,\phi^*\phi_*)\to(X',\psi^*\psi_*).$$

Conversely, from each $\CQ$-interior space $(X,\si)$ we may construct a $\CQ$-distributor
\begin{equation} \label{ka-def}
\ka_{\si}:X\oto\CO(X,\si),\quad\ka_{\si}(x,\mu)=\mu(x).
\end{equation}

\begin{prop} \label{I-ff}
A $\CQ$-functor $f:(X,\si)\to(Y,\sj)$ between $\CQ$-interior spaces is continuous if, and only if, there exists a (necessarily unique) $\CQ$-functor $g:\CO(Y,\sj)\to\CO(X,\si)$ such that
$$(f,g):(\ka_{\si}:X\oto\CO(X,\si))\to(\ka_{\sj}:Y\oto\CO(Y,\sj))$$
is a Chu transform.
\end{prop}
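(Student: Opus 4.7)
The plan is to show that the Chu transform condition forces a unique candidate for $g$, namely $g = f^{\la}|_{\CO(Y,\sj)}$, and then to identify its well-definedness with the open-preimage characterization of continuity given by Proposition \ref{cont-functor}\ref{cont-functor:l}.

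First I would unravel what it means for $(f,g):\ka_{\si}\to\ka_{\sj}$ to be a Chu transform. By definition this requires $\ka_{\sj}(fx,\lam)=\ka_{\si}(x,g\lam)$ for all $x\in X$ and $\lam\in\CO(Y,\sj)$. The right-hand side is $(g\lam)(x)$, while the left-hand side is $\lam(fx)$. Since $\lam$ is a presheaf on $Y$, for any $y\in Y$ one has $\lam(y)\circ\be(fx,y)\leq\lam(fx)$, and taking $y=fx$ yields equality with $\lam(fx)$ on the right-hand side of the join; hence
\[\bv_{y\in Y}\lam(y)\circ f_{\nat}(x,y)=\lam(fx),\]
which is precisely $(f^{\la}\lam)(x)$ by the definition $f^{\la}=(f_{\nat})^{*}$ from \eqref{fra-fla-def}. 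Consequently, the Chu transform condition is equivalent to $g\lam=f^{\la}\lam$ for every $\lam\in\CO(Y,\sj)$, which already delivers uniqueness of $g$.

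Next I would observe that this forced formula for $g$ makes sense as a map $\CO(Y,\sj)\to\CO(X,\si)$ if and only if $f^{\la}\lam\in\CO(X,\si)$ whenever $\lam\in\CO(Y,\sj)$; and this is exactly the characterization of continuity provided by Proposition \ref{cont-functor}\ref{cont-functor:l}. Thus continuity of $f$ is equivalent to the existence of such a $g$. In the continuous direction, once the codomain restriction is granted, $g$ is automatically a $\CQ$-functor: it is the restriction of the $\CQ$-functor $f^{\la}:\PY\to\PX$ (the right adjoint in the Kan adjunction induced by $f^{\nat}$) to the full $\CQ$-subcategories $\CO(Y,\sj)\subseteq\PY$ and $\CO(X,\si)\subseteq\PX$, and full $\CQ$-subcategory inclusions preserve and reflect the $\CQ$-functor property.

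There is no serious obstacle here; the only step requiring any calculation is the Yoneda-type identity $(f^{\la}\lam)(x)=\lam(fx)$, which is a direct consequence of the $\CQ$-distributor axiom for $\lam$. The remainder of the argument is a transcription between the Chu transform formulation and the open-set characterization of continuity supplied by Proposition \ref{cont-functor}.
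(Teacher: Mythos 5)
Your proposal is correct and follows essentially the same route as the paper: both hinge on the identity $(f^{\la}\lam)(x)=\lam(fx)$ (the paper's Equation \eqref{lam-fx}) to show that the Chu transform condition forces $g=f^{\la}|_{\CO(Y,\sj)}$, and both then invoke Proposition \ref{cont-functor}\ref{cont-functor:l} to identify well-definedness of this restriction with continuity of $f$. The only difference is presentational — you phrase it as a single equivalence rather than proving the two implications separately.
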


\begin{proof}
Suppose that $f:(X,\si)\to(Y,\sj)$ is continuous. From Proposition \ref{cont-functor}\ref{cont-functor:l} we know that the $\CQ$-functor $f^{\la}:\PY\to\PX$ can be restricted to
$$f^{\la}|_{\CO(Y,\sj)}:\CO(Y,\sj)\to\CO(X,\si),$$
and consequently
$$\ka_{\si}(x,f^{\la}|_{\CO(Y,\sj)}\lam)=(f^{\la}\lam)(x)=\lam(fx)=\ka_{\sj}(fx,\lam)$$
for all $x\in X$, $\lam\in\CO(Y,\sj)$, where the second equality holds because
\begin{equation} \label{lam-fx}
(f^{\la}\lam)(x)=((f_{\nat})^*\lam)(x)=\lam\circ f_{\nat}(x,-)=\lam\circ 1_Y^{\nat}(fx,-)=\lam(fx)
\end{equation}
for all $\lam\in\PY$, $x\in X$. Hence,
$$(f,f^{\la}|_{\CO(Y,\sj)}):\ka_{\si}\to\ka_{\sj}$$
is a Chu transform.

Conversely, suppose that $g:\CO(Y,\sj)\to\CO(X,\si)$ is a $\CQ$-functor making $(f,g):\ka_{\si}\to\ka_{\sj}$ a Chu transform. Then
$$(g\lam)(x)=\ka_{\si}(x,g\lam)=\ka_{\sj}(fx,\lam)=\lam(fx)=(f^{\la}\lam)(x)$$
for all $x\in X$, $\lam\in\CO(Y,\sj)$, where the last equality follows from \eqref{lam-fx}. This proves the uniqueness of $g$. In particular,
$$g=f^{\la}|_{\CO(Y,\sj)}:\CO(Y,\sj)\to\CO(X,\si)$$
means that $f^{\la}\lam\in\CO(X,\si)$ whenever $\lam\in\CO(Y,\sj)$. Hence, $f:(X,\si)\to(Y,\sj)$ is continuous by Proposition \ref{cont-functor}\ref{cont-functor:l}.

%Finally, if $(f,g):\ka_{\si}\to\ka_{\sj}$ is a Chu transform, then, in particular, we have a $\CQ$-functor
%$$g=f^{\la}|_{\CO(Y,\sj)}:\CO(Y,\sj)\to\CO(X,\si)$$
%by the uniqueness of $g$; that is, $f^{\la}\lam\in\CO(X,\si)$ whenever $\lam\in\CO(Y,\sj)$. Hence, $f:(X,\si)\to(Y,\sj)$ is continuous by Proposition \ref{cont-functor}\ref{cont-functor:l}.
\end{proof}

By Proposition \ref{I-ff}, the assignment
$$(f:(X,\si)\to(Y,\sj))\mapsto((f,f^{\la}|_{\CO(Y,\sj)}):(\ka_{\si}:X\oto\CO(X,\si))\to(\ka_{\sj}:Y\oto\CO(Y,\sj)))$$
defines a fully faithful 2-functor
$$\sI:\QInt\to\QChu$$
that embeds $\QInt$ in $\QChu$ as a full 2-subcategory. In fact, this embedding is coreflective:

\begin{thm} \label{I-dv-K}
$\sK:\QChu\to\QInt$ is a left inverse and right adjoint of $\sI:\QInt\to\QChu$; hence, $\QInt$ is a retract and a coreflective 2-subcategory of $\QChu$.
\end{thm}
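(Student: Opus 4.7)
I plan to break the proof into two parts: first verify $\sK\circ\sI=1_{\QInt}$ (the left-inverse claim), then construct a counit $\varepsilon:\sI\sK\Ra 1_{\QChu}$ that, paired with the identity unit, realizes $\sI\dashv\sK$ (the right-adjoint claim). The effect of $\sK\sI$ on morphisms is built into the definition of $\sI$: by Proposition \ref{I-ff}, $\sK\sI f = \sK(f,f^\la|_{\CO(Y,\sj)})=f$. So the real content of the first part is the object-level identity $\ka_\si^*(\ka_\si)_*=\si$ for every $\CQ$-interior space $(X,\si)$.

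To prove this identity I would compute, for $\nu\in\PX$ and $x\in X$,
$$(\ka_\si^*(\ka_\si)_*\nu)(x)=\bv_{\mu\in\CO(X,\si)}(\nu\lda\mu)\circ\mu(x),$$
using the explicit formulas for the Kan adjunction and the definition of $\ka_\si$. Because $\si$ satisfies $\si\leq 1_{\PX}$ and $\si\si=\si$, it is the right adjoint in $\QCat$ of the inclusion $i:\CO(X,\si)\hookrightarrow\PX$; in particular $\si\nu\leq\nu$ and the enriched adjunction equality $\nu\lda\mu=\si\nu\lda\mu$ holds for every $\mu\in\CO(X,\si)$. Taking $\mu=\si\nu$ and using $\si\nu\lda\si\nu\geq 1_{|\nu|}$ gives the lower bound $(\si\nu)(x)$; for the upper bound, substituting the adjunction equality and then applying $\si\nu\lda\mu\leq(\si\nu)(x)\lda\mu(x)$ together with the universal property of $\lda$ in $\CQ$ yields $(\nu\lda\mu)\circ\mu(x)\leq(\si\nu)(x)$ for every open $\mu$.

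For the adjunction, define $\varepsilon_\phi=(1_X,\tphi):\ka_{\phi^*\phi_*}\to\phi$ at each $\phi:X\oto Y$, where $\tphi=\phi^*\sy_Y$ is viewed as a $\CQ$-functor $Y\to\CO(X,\phi^*\phi_*)$ (well-defined because $\phi^*\phi_*\phi^*=\phi^*$). Verifying the Chu transform condition $\phi=\tphi^{\nat}\circ\ka_{\phi^*\phi_*}$ is an exact replica of the computation above with $\tphi y$ replacing $\si\nu$. Naturality of $\varepsilon$ in a Chu transform $(f,g):\phi\to\psi$ reduces to the identity $f^\la\widetilde{\psi}=\widetilde{\phi}\circ g$, which is precisely the pointwise form of the Chu transform condition $\psi(fx,y')=\phi(x,gy')$. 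The triangle identity $\sK\varepsilon=1$ is immediate because $\sK$ forgets the second component of a Chu transform; the triangle identity $\varepsilon_{\sI(X,\si)}=1_{\ka_\si}$ reduces to $\widetilde{\ka_\si}=1_{\CO(X,\si)}$, which holds because $\widetilde{\ka_\si}(\mu)=\ka_\si(-,\mu)=\mu$. With $\sI\dashv\sK$ and $\sI$ fully faithful by Proposition \ref{I-ff}, $\QInt$ is a retract and coreflective 2-subcategory of $\QChu$.

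The main obstacle is the object-level identity $\ka_\si^*(\ka_\si)_*=\si$: the raw computation delivers a supremum over $\CO(X,\si)$ that bears no obvious resemblance to $\si$, and the crucial step is to interpret $\si$ as the coreflection onto its image and extract the enriched equality $\nu\lda\mu=\si\nu\lda\mu$ for opens $\mu$. Everything else in the proof is essentially a re-run of the same bracket-and-extension pattern or a formal consequence of the adjunction calculus.
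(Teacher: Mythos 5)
Your proposal is correct. Part 1 is essentially the paper's argument: the crux in both is the identity $\nu\lda\mu=\si\nu\lda\mu$ for open $\mu$ (the paper's Equation \eqref{si-lda-lam}, proved there from the $\CQ$-functoriality of $\si$ together with $\si\leq 1_{\PX}$ and $\si\si=\si$ --- the same one-line argument that underlies your appeal to the enriched adjunction $i\dv\si$); the paper then packages the computation of $(\ka_{\si})^*(\ka_{\si})_*\mu$ as a single chain of equalities using $\ka_{\si}=1^{\nat}_{\CO(X,\si)}\circ\ka_{\si}$, whereas you bound the supremum $\bv_{\mu}(\nu\lda\mu)\circ\mu(x)$ from above and below; both are sound. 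Part 2 is where you genuinely diverge: the paper proves $\sI\dv\sK$ by verifying the universal property of the (identity) unit --- for each continuous $h:(X,\si)\to(Y,\psi^*\psi_*)$ it exhibits the unique Chu transform $(h,g)$ with $g=h^{\la}\tpsi$ --- while you instead exhibit the counit $\varepsilon_{\phi}=(1_X,\tphi)$ globally, check its naturality (which, as you note, is exactly the Chu-transform condition $f^{\la}\tpsi=\tphi\, g$), and verify the two triangle identities. The two routes carry the same information ($g=h^{\la}\tpsi$ is precisely the adjunct $\varepsilon_{\psi}\circ\sI h$ obtained from your counit), but yours makes the counit an explicit, reusable piece of data and replaces the per-morphism uniqueness argument by the triangle identity $\varepsilon_{\sI(X,\si)}=1$, which in turn rests on $\widetilde{\ka_{\si}}=1_{\CO(X,\si)}$ and hence on Step 1; the paper's version localizes the work to an existence-and-uniqueness check. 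Both are complete.
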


\begin{proof}
{\bf Step 1.} $\sK$ is a left inverse of $\sI$. For each $\CQ$-interior space $(X,\si)$, since $\sK\sI(X,\si)=(X,(\ka_{\si})^*(\ka_{\si})_*)$, we must show that
\begin{equation} \label{ka-si}
\si=(\ka_{\si})^*(\ka_{\si})_*.
\end{equation}
For each $\mu\in\PX$, $\lam\in\CO(X,\si)$, we claim that
\begin{equation} \label{si-lda-lam}
\mu\lda\lam=\si\mu\lda\lam,
\end{equation}
since the $\CQ$-functoriality of $\si$ forces
$$\mu\lda\lam=1_{\PX}^{\nat}(\lam,\mu)\leq 1_{\PX}^{\nat}(\si\lam,\si\mu)=1_{\PX}^{\nat}(\lam,\si\mu)=\si\mu\lda\lam,$$
and the reverse inequality follows from \eqref{int-def}. It follows that
\begin{align*}
\si\mu&=\ka_{\si}(-,\si\mu)&(\text{Equation \eqref{ka-def}})\\
&=1_{\CO(X,\si)}^{\nat}(-,\si\mu)\circ\ka_{\si}\\
&=\bv_{\lam\in\CO(X,\si)}1_{\CO(X,\si)}^{\nat}(\lam,\si\mu)\circ\ka_{\si}(-,\lam)\\
&=\bv_{\lam\in\CO(X,\si)}(\si\mu\lda\lam)\circ\ka_{\si}(-,\lam)\\
&=\bv_{\lam\in\CO(X,\si)}(\mu\lda\lam)\circ\ka_{\si}(-,\lam)&(\text{Equation \eqref{si-lda-lam}})\\
&=\bv_{\lam\in\CO(X,\si)}(\mu\lda\ka_{\si}(-,\lam))\circ\ka_{\si}(-,\lam)&(\text{Equation \eqref{ka-def}})\\
&=(\mu\lda\ka_{\si})\circ\ka_{\si}\\
&=(\ka_{\si})^*(\ka_{\si})_*\mu
\end{align*}
for all $\mu\in\PX$, as desired.

%Let $\mu\in\PX$ and $x\in\bbX_0$. For any $\lam\in\CO(\bbX,\si)$,
%$$\PX(\lam,\mu)\leq\PX(\si\lam,\si\mu)=\PX(\lam,\si\mu)\leq(\si\mu)(x)\lda\lam(x)$$
%implies that
%$$\Big(\bw_{y\in\bbX_0}\mu(y)\lda\lam(y)\Big)\circ\lam(x)=\PX(\lam,\mu)\circ\lam(x)\leq(\si\mu)(x).$$
%In particular, if $\lam=\si\mu$, then
%$$\Big(\bw_{y\in\bbX_0}\mu(y)\lda(\si\mu)(y)\Big)\circ(\si\mu)(x)\geq\Big(\bw_{y\in\bbX_0}\mu(y)\lda\mu(y)\Big)\circ(\si\mu)(x)\geq 1_{|\mu|}\circ(\si\mu)(x)=(\si\mu)(x).$$
%It follows that
%\begin{align*}
%(\si\mu)(x)&=\bv_{\lam\in\CO(\bbX,\si)}\Big(\bw_{y\in\bbX_0}\mu(y)\lda\lam(y)\Big)\circ\lam(x)\\
%&=\bv_{\lam\in\CO(\bbX,\si)}\Big(\bw_{y\in\bbX_0}\mu(y)\lda\ka_{\si}(y,\lam)\Big)\circ\ka_{\si}(x,\lam)\\
%&=(\mu\lda\ka_{\si})\circ\ka_{\si}(x,-)\\
%&=((\ka_{\si})^*(\ka_{\si})_*\mu)(x)
%\end{align*}
%for all $\mu\in\PX$, $x\in\bbX_0$, as desired.

{\bf Step 2.} $\sK$ is a right adjoint of $\sI$. For each $\CQ$-interior space $(X,\si)$, since $\sK\sI(X,\si)=(X,\si)$, it suffices to show that the identity natural transformation
$$\{1_X:(X,\si)\to\sK\sI(X,\si)\mid(X,\si)\in\ob(\QInt)\}$$
is the unit of the adjunction $\sI\dv\sK$; that is, for each $\CQ$-distributor $\psi:Y\oto Z$ and continuous $\CQ$-functor
$$h:(X,\si)\to\sK(\psi:Y\oto Z)=(Y,\psi^*\psi_*),$$
there exists a unique Chu transform
$$(f,g):\sI(X,\si)=(\ka_{\si}:X\oto\CO(X,\si))\to(\psi:Y\oto Z)$$
such that the triangle
$$\bfig
\qtriangle<800,500>[(X,\si)`\sK\sI(X,\si)`(Y,\psi^*\psi_*);1_X`h`\sK(f,g)]
\efig$$
is commutative. Since $\sK(f,g)=f$, it remains to show that there exists a unique $\CQ$-functor $g:Z\to\CO(X,\si)$ such that
$$(h,g):(\ka_{\si}:X\oto\CO(X,\si))\to(\psi:Y\oto Z)$$
is a Chu transform. To this end, we define
$$g:=h^{\la}\tpsi:Z\to\PY\to\CO(X,\si).$$

First, $g$ is well defined. Indeed, for each $z\in Z$, by Equation \eqref{tphi-Yoneda} we have
$$\tpsi z=\psi^*\sy_Z z=\psi^*\psi_*\psi^*\sy_Z z\in\CO(Y,\psi^*\psi_*),$$
and together with the continuity of $h$ we deduce that $gz=h^{\la}\tpsi z\in\CO(X,\si)$.

Second, $(h,g)$ is a Chu transform. Indeed,
$$\ka_{\si}(x,gz)=(gz)(x)=(h^{\la}\tpsi z)(x)=(\tpsi z)(hx)=\psi(hx,z)$$
for all $x\in X$, $z\in Z$, where the penultimate equality follows from \eqref{lam-fx}.

Finally, for the uniqueness of $g$, suppose that $g':Z\to\CO(X,\si)$ is another $\CQ$-functor making $(h,g'):\ka_{\si}\to\psi$ a Chu transform. Then
$$(g'z)(x)=\ka_{\si}(x,g'z)=\psi(hx,z)=(\tpsi z)(hx)=(h^{\la}\tpsi z)(x)=(gz)(x)$$
for all $z\in Z$, $x\in X$. Hence $g'=g$.
\end{proof}

\begin{rem} \label{Pratt}
As pointed out by the anonymous referee, $\QChu$ may be identified with the \emph{comma category} (cf. \cite[Section II.6]{MacLane1998})
$$\QCat\da(-)^{\la},$$
where $(-)^{\la}:(\QCat)^{\op}\to\QCat$ is the functor sending each $\CQ$-functor $f:X\to Y$ to $f^{\la}:\PY\to\PX$ (cf. \eqref{fra-fla-def}). Under this identification, $\sI:\QInt\to\QChu$ actually sends each $\CQ$-interior space $(X,\si)$ to the inclusion $\CQ$-functor $\CO(X,\si)\ \to/^(->/\PX$; that is, the transpose $\widetilde{\ka_{\si}}$ of the $\CQ$-distributor $\ka_{\si}:X\oto\CO(X,\si)$.

Moreover, it is stated in \cite{Pratt1999} that a topological space is an \emph{extensional Chu space} whose columns are closed under arbitrary union and finite intersection. This observation may be generalized to our context as follows: a $\CQ$-distributor $\phi:X\oto Y$ can be identified with a $\CQ$-interior space if its transpose $\tphi:Y\to\PX$ is a fully faithful $\CQ$-functor, and if the image
$$\sIm\tphi:=\{\tphi y\mid y\in Y\}$$
of $\tphi$, as a $\CQ$-subcategory of $\PX$, is closed under suprema in $\PX$ (cf. \cite[Proposition 4.1.8]{Shen2014}); where the latter requirement may also be equivalently expressed as
$${\sup}_{\PX}\tphi^{\,\ra}\lam\in\sIm\tphi$$
for all $\lam\in\PY$. In this case, it is not difficult to prove that
$$\CO(X,\phi^*\phi_*)=\sIm\tphi,$$
so that the $\CQ$-interior space identified with $\phi:X\oto Y$ is precisely $(X,\phi^*\phi_*)$. 
%Indeed, for every $\mu=\phi^*\phi_*\mu\in\CO(X,\phi^*\phi_*)$,
%\begin{align*}
%\mu&=\phi^*\phi_*\mu\\
%&=\phi^*{\sup}_{\PY}\sy_Y^{\ra}\phi_*\mu&(\text{\cite[Example 4.15]{Shen2013a}})\\
%&={\sup}_{\PX}(\phi^*)^{\ra}\sy_Y^{\ra}\phi_*\mu&(\text{\cite[Corollary 2.11]{Shen2013a}})\\
%&={\sup}_{\PX}\tphi\,^{\ra}\phi_*\mu\in\sIm\tphi.&(\text{Equation \eqref{tphi-Yoneda}})
%\end{align*}
\end{rem}

\section{Continuous $\CQ$-distributors} \label{Q-IntDist}

Since $f^{\la}=(f_{\nat})^*$, the continuity of a $\CQ$-functor $f:(X,\si)\to(Y,\sj)$ between $\CQ$-interior spaces is completely determined by its graph $f_{\nat}:X\oto Y$, i.e.,
$$(f_{\nat})^*\sj\leq\si (f_{\nat})^*:\PY\to\PX.$$
If $f_{\nat}$ is replaced by an arbitrary $\CQ$-distributor $\zeta:X\oto Y$, we come to the following definition:

\begin{defn} \label{cont-dist-def}
A \emph{continuous $\CQ$-distributor} $\zeta:(X,\si)\oto(Y,\sj)$ between $\CQ$-interior spaces is a $\CQ$-distributor $\zeta:X\oto Y$ such that
$$\zeta^*\sj\leq\si\zeta^*:\PY\to\PX.$$
\end{defn}

With the local order inherited from $\QDist$, $\CQ$-interior spaces and continuous $\CQ$-distributors constitute a (large) quantaloid $\QIntDist$, for it is easy to verify that compositions and joins of continuous $\CQ$-distributors are still continuous. There is clearly a 2-functor
$$(-)_{\nat}:(\QInt)^{\co}\to\QIntDist$$
sending each continuous $\CQ$-functor $f:(X,\si)\to(Y,\sj)$ to the continuous $\CQ$-distributor $f_{\nat}:(X,\si)\oto(Y,\sj)$.

\begin{prop} \label{cont-dist}
Let $(X,\si)$, $(Y,\sj)$ be $\CQ$-interior spaces. For each $\CQ$-distributor $\zeta:X\oto Y$, the following statements are equivalent:
\begin{enumerate}[label={\rm(\roman*)}]
\item \label{cont-dist:def} $\zeta:(X,\si)\oto(Y,\sj)$ is a continuous $\CQ$-distributor.
\item \label{cont-dist:l} $\zeta^*\sj\leq\si\zeta^*\sj$, thus $\zeta^*\sj=\si\zeta^*\sj$; that is, $\zeta^*\lam\in\CO(X,\si)$ whenever $\lam\in\CO(Y,\sj)$.
\item \label{cont-dist:r} $\sj\zeta_*\leq\sj\zeta_*\si$, thus $\sj\zeta_*=\sj\zeta_*\si$.
%\item $\eta_{\dag}\mu\in\sC(Y,d)$ whenever $\mu\in\sC(X,c)$.
\end{enumerate}
\end{prop}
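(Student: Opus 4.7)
The plan is to derive all three equivalences from elementary manipulations using only three ingredients: (a) the defining properties of $\CQ$-interior operators, namely $\si\leq 1_{\PX}$, $\si\si=\si$, and the analogues for $\sj$; (b) the Kan adjunction $\zeta^*\dv\zeta_*$, with unit $1_{\PY}\leq\zeta_*\zeta^*$ and counit $\zeta^*\zeta_*\leq 1_{\PX}$; and (c) the monotonicity of the $\CQ$-functors $\zeta^*$, $\zeta_*$, $\si$, $\sj$. No deep structural input is required; the only real obstacle is bookkeeping---keeping track of which side of a composition to pre- or post-compose on, and deciding at which step to transpose across the Kan adjunction. The discipline I would follow is to exploit $\zeta^*\dv\zeta_*$ only for the transposition in the \ref{cont-dist:def}$\Leftrightarrow$\ref{cont-dist:r} leg, and otherwise to reduce everything by strict pre- or post-composition combined with the idempotency laws $\si\si=\si$ and $\sj\sj=\sj$.

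For \ref{cont-dist:def}$\Leftrightarrow$\ref{cont-dist:l}, I would postcompose the continuity inequality $\zeta^*\sj\leq\si\zeta^*$ on the right by $\sj$; using $\sj\sj=\sj$ this produces $\zeta^*\sj\leq\si\zeta^*\sj$, while the reverse inequality is immediate from $\si\leq 1_{\PX}$, yielding the promoted equality $\zeta^*\sj=\si\zeta^*\sj$. Conversely, \ref{cont-dist:l} gives \ref{cont-dist:def} at once, since monotonicity of $\si\zeta^*$ combined with $\sj\leq 1_{\PY}$ forces $\si\zeta^*\sj\leq\si\zeta^*$. The reformulation ``$\zeta^*\lam\in\CO(X,\si)$ whenever $\lam\in\CO(Y,\sj)$'' then follows by evaluating $\zeta^*\sj=\si\zeta^*\sj$ at such $\lam$ and using $\sj\lam=\lam$.

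For \ref{cont-dist:def}$\Leftrightarrow$\ref{cont-dist:r}, the key move is to transpose \ref{cont-dist:def} across the Kan adjunction: $\zeta^*\sj\leq\si\zeta^*$ is equivalent to $\sj\leq\zeta_*\si\zeta^*$. From this transposed form, evaluating at $\zeta_*\mu$ and using the counit $\zeta^*\zeta_*\leq 1_{\PX}$ together with monotonicity of $\zeta_*\si$ yields $\sj\zeta_*\leq\zeta_*\si$; postcomposing on the left by the monotone $\sj$ and using $\sj\sj=\sj$ then delivers $\sj\zeta_*\leq\sj\zeta_*\si$, which is \ref{cont-dist:r} (with the promoted equality following from $\si\leq 1_{\PX}$). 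Conversely, starting from \ref{cont-dist:r}, I would insert the unit $1_{\PY}\leq\zeta_*\zeta^*$ on the left and postcompose by $\zeta^*$ to produce the chain
\[
\sj\leq\sj\zeta_*\zeta^*\leq\sj\zeta_*\si\zeta^*\leq\zeta_*\si\zeta^*,
\]
the final inequality using $\sj\leq 1_{\PY}$. Transposing back via $\zeta^*\dv\zeta_*$ recovers \ref{cont-dist:def}, closing the cycle.
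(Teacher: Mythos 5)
Your proof is correct and uses essentially the same ingredients and manipulations as the paper's: the unit and counit of $\zeta^*\dv\zeta_*$, idempotency of $\si$ and $\sj$, and $\si\leq 1_{\PX}$, $\sj\leq 1_{\PY}$. The only difference is organizational --- you prove \ref{cont-dist:def}$\Leftrightarrow$\ref{cont-dist:l} and \ref{cont-dist:def}$\Leftrightarrow$\ref{cont-dist:r} separately, whereas the paper runs the cycle \ref{cont-dist:def}$\Rightarrow$\ref{cont-dist:l}$\Rightarrow$\ref{cont-dist:r}$\Rightarrow$\ref{cont-dist:def} --- and your chains of inequalities match the paper's almost verbatim.
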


\begin{proof}
\ref{cont-dist:def}$\implies$\ref{cont-dist:l}: If $\zeta^*\sj\leq\si\zeta^*$, then $\zeta^*\sj=\zeta^*\sj\sj\leq\si\zeta^*\sj$.

\ref{cont-dist:l}$\implies$\ref{cont-dist:r}: This follows from $\sj=\sj\sj\leq\sj\zeta_*\zeta^*\sj\leq\sj\zeta_*\si\zeta^*\sj\leq\sj\zeta_*\si\zeta^*$ and $\zeta^*\dv\zeta_*$.

\ref{cont-dist:r}$\implies$\ref{cont-dist:def}: $\zeta^*\sj\leq\si\zeta^*$ follows immediately from $\sj\leq\sj\zeta_*\zeta^*\leq\sj\zeta_*\si\zeta^*\leq\zeta_*\si\zeta^*$ and $\zeta^*\dv\zeta_*$.
\end{proof}

As a generalized version of Proposition \ref{K-functor} we have:

\begin{prop} \label{hK-functor}
Each commutative square
$$\bfig
\square<500,400>[X`X'`Y`Y';\zeta`\phi`\psi`\eta]
\place(250,0)[\circ] \place(250,400)[\circ] \place(0,200)[\circ] \place(500,200)[\circ]
\efig$$
in $\QDist$ induces a continuous $\CQ$-distributor $\zeta:(X,\phi^*\phi_*)\oto(X',\psi^*\psi_*)$.
\end{prop}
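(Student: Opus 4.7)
The plan is to verify the continuity condition of Definition \ref{cont-dist-def} directly, which in the present setting amounts to the inequality
$$\zeta^*\psi^*\psi_*\leq\phi^*\phi_*\zeta^*:\PX'\to\PX.$$
My strategy mirrors the ``right-square commutes, left-square inequality'' pattern that the paper itself invokes in the proof of Proposition \ref{K-functor}: I split the verification into two independent ingredients which, chained together, yield the inequality.

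First, I exploit the contravariant 2-functoriality of $(-)^*:\QDist\to(\QCat)^{\op}$, combined with the commutativity $\psi\circ\zeta=\eta\circ\phi$ of the given square, to obtain the exact identity
$$\zeta^*\psi^*=(\psi\circ\zeta)^*=(\eta\circ\phi)^*=\phi^*\eta^*.$$
This reduces the continuity inequality to $\phi^*\eta^*\psi_*\leq\phi^*\phi_*\zeta^*$, which by the monotonicity of $\phi^*$ follows as soon as we prove the key estimate
$$\eta^*\psi_*\leq\phi_*\zeta^*:\PX'\to\PY.$$

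Second, I verify this estimate by unpacking the Kan adjoints and using the implications in $\CQ$. For $\lam\in\PX'$ the desired inequality reads $(\lam\lda\psi)\circ\eta\leq(\lam\circ\zeta)\lda\phi$, which by the adjunction $(-\circ\phi)\dv(-\lda\phi)$ is equivalent to $((\lam\lda\psi)\circ\eta)\circ\phi\leq\lam\circ\zeta$. Rewriting the left-hand side via associativity and the commutative square gives $((\lam\lda\psi)\circ\psi)\circ\zeta$, and the counit-style bound $(\lam\lda\psi)\circ\psi\leq\lam$ of the very same adjunction closes the estimate.

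Chaining the two steps yields $\zeta^*\psi^*\psi_*=\phi^*\eta^*\psi_*\leq\phi^*\phi_*\zeta^*$, which is the continuity of $\zeta:(X,\phi^*\phi_*)\oto(X',\psi^*\psi_*)$. I do not anticipate a genuine obstacle; the argument is essentially bookkeeping in the internal homs of $\QDist$. The only care required is to track the variances of the Kan adjoints $(-)^*$ and $(-)_*$, the handedness of $\lda$, and the orientation of $\zeta$ (namely $\zeta:X\oto X'$, hence $\zeta^*:\PX'\to\PX$), so that all domain/codomain compatibilities line up. As a sanity check, specialising $\zeta=f_\nat$ and $\eta=g^\nat$ recovers the commutative-square identity $\psi\circ f_\nat=g^\nat\circ\phi$ defining a Chu transform, so this argument subsumes the one needed for Proposition \ref{K-functor}.
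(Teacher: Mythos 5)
Your proposal is correct and follows essentially the same route as the paper: both use the contravariant functoriality of $(-)^*$ on the commutative square to get $\zeta^*\psi^*=\phi^*\eta^*$, and then establish $\eta^*\psi_*\leq\phi_*\zeta^*$ via the adjunction $(-\circ\phi)\dv(-\lda\phi)$ together with the counit inequality $(\lam\lda\psi)\circ\psi\leq\lam$. The only cosmetic difference is that you transpose across the adjunction once at the outset, whereas the paper applies the unit and simplifies under $\lda\phi$; the computation is the same.
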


\begin{proof}
In order to prove $\zeta^*\psi^*\psi_*\leq\phi^*\phi_*\zeta^*$, let us consider the following diagram:
$$\bfig
\iiixii|aalrrbb|<700,500>[\PX'`\PY'`\PX'`\PX`\PY`\PX;\psi_*`\psi^*`\zeta^*`\eta^*`\zeta^*`\phi_*`\phi^*]
\place(350,250)[\geq]
\efig$$
The commutativity of the right square follows immediately from the functoriality of $(-)^*:(\QDist)^{\op}\to\QCat$, and it remains to verify $\eta^*\psi_*\leq\phi_*\zeta^*$. Indeed,
\begin{align*}
\eta^*\psi_*\mu'&=(\mu'\lda\psi)\circ\eta\\
&\leq((\mu'\lda\psi)\circ\eta\circ\phi)\lda\phi\\
&=((\mu'\lda\psi)\circ\psi\circ\zeta)\lda\phi\\
&\leq(\mu'\circ\zeta)\lda\phi\\
&=\phi_*\zeta^*\mu'
\end{align*}
for all $\mu'\in\PX'$, and thus the conclusion follows.
\end{proof}

Proposition \ref{hK-functor} actually gives rise to a quantaloid homomorphism
$$\hK:\Arr(\QDist)\to\QIntDist$$
that sends each arrow $(\zeta,\eta):(\phi:X\oto Y)\to(\psi:X'\oto Y')$ in $\Arr(\QDist)$ to the continuous $\CQ$-distributor $\zeta:(X,\phi^*\phi_*)\oto(X',\psi^*\psi_*)$.

Since every Chu transform $(f,g):\phi\to\psi$ induces an arrow $(f_{\nat},g^{\nat}):\phi\to\psi$ in $\Arr(\QDist)$, there is a 2-functor
$$(\Box_{\nat},\Box^{\nat}):(\QChu)^{\co}\to\Arr(\QDist),\quad(f,g)\mapsto(f_{\nat},g^{\nat})$$
which is neutral on objects. As the commutative square
\begin{equation} \label{K-hK}
\bfig
\square/->`<-`<-`->/<1200,500>[\Arr(\QDist)`\QIntDist`(\QChu)^{\co}`(\QInt)^{\co};\hK`(\Box_{\nat},\Box^{\nat})`(-)_{\nat}`\sK^{\co}]
\efig
\end{equation}
reveals, $\hK$ may be viewed as an extension of the functor $\sK$. Moreover:

\begin{prop} \label{hK-full}
$\hK:\Arr(\QDist)\to\QIntDist$ is a full quantaloid homomorphism.
\end{prop}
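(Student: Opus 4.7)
Since $\hK$ has already been shown in Proposition \ref{hK-functor} to be a quantaloid homomorphism, to establish fullness it suffices to show surjectivity on hom-sets. Accordingly, I would fix two objects $\phi:X\oto Y$ and $\psi:X'\oto Y'$ of $\Arr(\QDist)$ together with an arbitrary continuous $\CQ$-distributor $\zeta:(X,\phi^*\phi_*)\oto(X',\psi^*\psi_*)$, and aim to construct some $\eta:Y\oto Y'$ with $\psi\circ\zeta=\eta\circ\phi$, so that $(\zeta,\eta):\phi\to\psi$ is an arrow in $\Arr(\QDist)$ lifting $\zeta$ under $\hK$. The natural candidate is the right-division
$$\eta:=(\psi\circ\zeta)\lda\phi:Y\oto Y',$$
which by the universal property of the internal hom $\lda$ in $\QDist$ is the largest $\eta'$ with $\eta'\circ\phi\leq\psi\circ\zeta$. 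Hence $\eta\circ\phi\leq\psi\circ\zeta$ is automatic and the substantive task is to prove the reverse inequality.

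The key observation I would invoke is that open presheaves of $(X,\phi^*\phi_*)$ are precisely the presheaves in the image of $\phi^*$:
$$\CO(X,\phi^*\phi_*)=\phi^*\PY,$$
since any $\mu=\phi^*\lam$ satisfies $\phi^*\phi_*\mu=\phi^*\phi_*\phi^*\lam=\phi^*\lam=\mu$ by the standard triangle identity $\phi^*\phi_*\phi^*=\phi^*$ of the Kan adjunction $\phi^*\dv\phi_*$, and conversely any $\mu\in\CO(X,\phi^*\phi_*)$ equals $\phi^*(\phi_*\mu)$. The same identity for $\psi^*\dv\psi_*$ shows $\psi^*\sy_{Y'}y'\in\CO(X',\psi^*\psi_*)$ for every $y'\in Y'$, so Proposition \ref{cont-dist}\ref{cont-dist:l} applied to $\zeta$ yields, using \eqref{tphi-Yoneda} and the functoriality of $(-)^*$,
$$\widetilde{\psi\circ\zeta}(y')=(\psi\circ\zeta)^*\sy_{Y'}y'=\zeta^*\psi^*\sy_{Y'}y'\in\CO(X,\phi^*\phi_*)=\phi^*\PY.$$

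Consequently $\widetilde{\psi\circ\zeta}(y')$ is a fixed point of $\phi^*\phi_*$, and unpacking with $\phi_*\mu=\mu\lda\phi$ together with the definition of $\eta$ gives
$$\widetilde{\psi\circ\zeta}(y')=\phi^*\phi_*\widetilde{\psi\circ\zeta}(y')=\phi^*\bigl(\widetilde{\psi\circ\zeta}(y')\lda\phi\bigr)=\phi^*\tilde\eta(y')=\widetilde{\eta\circ\phi}(y').$$
Since transposes determine $\CQ$-distributors, this gives $\psi\circ\zeta=\eta\circ\phi$, as required. The entire argument hinges on the identification $\CO(X,\phi^*\phi_*)=\phi^*\PY$; once that is in hand the rest is straightforward bookkeeping via continuity and the internal hom, and I do not anticipate any genuine obstacle.
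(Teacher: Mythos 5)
Your proposal is correct and follows essentially the same route as the paper: the same candidate $\eta=(\psi\circ\zeta)\lda\phi$, verified to satisfy $\eta\circ\phi=\psi\circ\zeta$ by combining the Kan adjunction identity $\psi^*\psi_*\psi^*=\psi^*$, continuity of $\zeta$ via Proposition \ref{cont-dist}\ref{cont-dist:l}, and the Yoneda/transpose formulas \eqref{tphi-def}--\eqref{tphi-Yoneda}. Making the identification $\CO(X,\phi^*\phi_*)=\phi^*\PY$ explicit is a pleasant repackaging of the paper's chain of equalities, but not a different argument.
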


\begin{proof}
It remains to show that $\hK$ is full. Given $\CQ$-distributors $\phi:X\oto Y$, $\psi:X'\oto Y'$, we need to show that
$$\hK:\Arr(\QDist)(\phi,\psi)\to\QIntDist((X,\phi^*\phi_*),(X',\psi^*\psi_*))$$
is surjective. To this end, for any continuous $\CQ$-distributor $\zeta:(X,\phi^*\phi_*)\oto(X',\psi^*\psi_*)$, we must find a $\CQ$-distributor $\eta:Y\oto Y'$ such that $(\zeta,\eta):\phi\to\psi$ is an arrow in $\Arr(\QDist)$. Indeed, let
$$\eta:=(\psi\circ\zeta)\lda\phi:Y\oto Y'.$$
Then
\begin{align*}
\eta(-,y')\circ\phi&=\phi^*\phi_*\zeta^*\psi(-,y')&(\eta=(\psi\circ\zeta)\lda\phi)\\
&=\phi^*\phi_*\zeta^*\psi^*\sy_{Y'}y'&(\text{Equations \eqref{tphi-def} and \eqref{tphi-Yoneda}})\\
&=\phi^*\phi_*\zeta^*\psi^*\psi_*\psi^*\sy_{Y'}y'&(\psi^*\dv\psi_*)\\
&=\zeta^*\psi^*\psi_*\psi^*\sy_{Y'}y'&(\text{Proposition \ref{cont-dist}\ref{cont-dist:l}})\\
&=\zeta^*\psi^*\sy_{Y'}y'&(\psi^*\dv\psi_*)\\
&=\psi(-,y')\circ\zeta&(\text{Equations \eqref{tphi-def} and \eqref{tphi-Yoneda}})
\end{align*}
for all $y'\in Y'$, as desired.
\end{proof}

Analogously to \eqref{K-hK}, there is a quantaloid homomorphism
$$\hI:\QIntDist\to\Arr(\QDist)$$
such that the square
\begin{equation} \label{I-hI}
\bfig
\square/<-`<-`<-`<-/<1200,500>[\Arr(\QDist)`\QIntDist`(\QChu)^{\co}`(\QInt)^{\co};\hI`(\Box_{\nat},\Box^{\nat})`(-)_{\nat}`\sI^{\co}]
\efig
\end{equation}
is commutative, and thus extends $\sI:\QInt\to\QChu$:

\begin{prop} \label{hI-functor}
For each continuous $\CQ$-distributor $\zeta:(X,\si)\oto(Y,\sj)$ between $\CQ$-interior spaces,
$$(\zeta,(\zeta^*)^{\nat}|_{\CO(X,\si),\CO(Y,\sj)}):(\ka_{\si}:X\oto\CO(X,\si))\to(\ka_{\sj}:Y\oto\CO(Y,\sj)))$$
is an arrow in $\Arr(\QDist)$, where $(\zeta^*)^{\nat}|_{\CO(X,\si),\CO(Y,\sj)}$ is the restriction of $(\zeta^*)^{\nat}:\PX\oto\PY$ on $\CO(X,\si)$ and $\CO(Y,\sj)$.
\end{prop}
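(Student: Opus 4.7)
The plan is to verify that $(\zeta,\eta):\ka_{\si}\to\ka_{\sj}$ is an arrow in $\Arr(\QDist)$, where $\eta:=(\zeta^*)^{\nat}|_{\CO(X,\si),\CO(Y,\sj)}:\CO(X,\si)\oto\CO(Y,\sj)$. By definition of $\Arr(\QDist)$, this amounts to the single identity
$$\ka_{\sj}\circ\zeta=\eta\circ\ka_{\si}$$
of $\CQ$-distributors $X\oto\CO(Y,\sj)$, which I would establish by a direct pointwise computation at a generic pair $(x,\lam)\in X\times\CO(Y,\sj)$.

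For the left-hand side, unfolding the composition in $\QDist$ and using the definition $\ka_{\sj}(y,\lam)=\lam(y)$ from \eqref{ka-def} gives
$$(\ka_{\sj}\circ\zeta)(x,\lam)=\bv_{y\in Y}\lam(y)\circ\zeta(x,y)=(\lam\circ\zeta)(x)=(\zeta^*\lam)(x).$$
For the right-hand side, observing that $(\zeta^*)^{\nat}(\mu,\lam)=\zeta^*\lam\lda\mu$ and applying \eqref{ka-def} again yields
$$(\eta\circ\ka_{\si})(x,\lam)=\bv_{\mu\in\CO(X,\si)}(\zeta^*\lam\lda\mu)\circ\mu(x),$$
so the task reduces to showing that this last join equals $(\zeta^*\lam)(x)$. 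The inequality ``$\leq$'' is immediate from the defining adjunction of $\lda$ in $\QDist$, namely $(\zeta^*\lam\lda\mu)\circ\mu\leq\zeta^*\lam$ in $\PX$, evaluated at $x$; and the reverse inequality is obtained by choosing the witness $\mu:=\zeta^*\lam$, whose corresponding summand dominates $1_{|\lam|}\circ(\zeta^*\lam)(x)=(\zeta^*\lam)(x)$.

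The only substantive point I expect to flag is that $\mu:=\zeta^*\lam$ is a legitimate choice in the join, i.e., that $\zeta^*\lam\in\CO(X,\si)$; this is precisely what Proposition \ref{cont-dist}\ref{cont-dist:l} guarantees from the continuity of $\zeta$ together with the openness of $\lam$, and it is the single place where the continuity hypothesis is actually used. Conceptually, since $f^{\la}=(f_{\nat})^*$ by \eqref{fra-fla-def}, the prescription $\zeta\mapsto(\zeta^*)^{\nat}|_{\CO(X,\si),\CO(Y,\sj)}$ is the natural distributor-level analogue of $f\mapsto f^{\la}|_{\CO(Y,\sj)}$ from Proposition \ref{I-ff}, and the present proposition is exactly the $\Arr(\QDist)$-version of that result needed for commutativity of the square \eqref{I-hI}.
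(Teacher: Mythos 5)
Your proof is correct and follows essentially the same route as the paper: both reduce the required identity $\ka_{\sj}\circ\zeta=\eta\circ\ka_{\si}$ to the observation that $\ka_{\sj}(-,\lam)\circ\zeta=\zeta^*\lam$ together with the openness of $\zeta^*\lam$ guaranteed by Proposition \ref{cont-dist}\ref{cont-dist:l}. The only cosmetic difference is that where the paper invokes the unit law $1_{\CO(X,\si)}^{\nat}\circ\ka_{\si}=\ka_{\si}$ in the quantaloid $\QDist$ to conclude in one line, you re-derive that identity by hand via the two inequalities on the join $\bv_{\mu\in\CO(X,\si)}(\zeta^*\lam\lda\mu)\circ\mu(x)$.
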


\begin{proof}
Note that the $\CQ$-distributor $(\zeta^*)^{\nat}|_{\CO(X,\si),\CO(Y,\sj)}:\CO(X,\si)\oto\CO(Y,\sj)$ is well defined since $\zeta^*\lam\in\CO(X,\si)$ whenever $\lam\in\CO(Y,\sj)$ by Proposition \ref{cont-dist}\ref{cont-dist:l}. The conclusion then follows from
$$\ka_{\sj}(-,\lam)\circ\zeta=\lam\circ\zeta=\zeta^*\lam=\ka_{\si}(-,\zeta^*\lam)=1_{\CO(X,\si)}^{\nat}(-,\zeta^*\lam)\circ\ka_{\si}=(\zeta^*)^{\nat}|_{\CO(X,\si),\CO(Y,\sj)}(-,\lam)\circ\ka_{\si}$$
for all $\lam\in\CO(Y,b)$.
\end{proof}

The following proposition is an immediate consequence of Theorem \ref{I-dv-K} in combination with the definitions of $\hK$ and $\hI$:

\begin{prop} \label{hK-hI-id}
$\hK:\Arr(\QDist)\to\QIntDist$ is a left inverse of $\hI:\QIntDist\to\Arr(\QDist)$; hence, $\QIntDist$ is a retract of $\Arr(\QDist)$.
\end{prop}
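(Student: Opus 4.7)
The approach is a direct verification that $\hK\circ\hI$ is the identity endomorphism on $\QIntDist$, broken into the standard two checks (objects and morphisms). The crucial external input is already in hand: the identity $\si=(\ka_\si)^*(\ka_\si)_*$ established in Step 1 of Theorem \ref{I-dv-K}.

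First, I would handle the object level. By the definition of $\hI$, $\hI(X,\si)$ is simply the $\CQ$-distributor $\ka_\si:X\oto\CO(X,\si)$, so applying $\hK$ yields $\hK\hI(X,\si)=(X,(\ka_\si)^*(\ka_\si)_*)$. An immediate appeal to Equation \eqref{ka-si} from Theorem \ref{I-dv-K} then gives $(\ka_\si)^*(\ka_\si)_*=\si$, and hence $\hK\hI(X,\si)=(X,\si)$.

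Next, on morphisms, I would invoke Proposition \ref{hI-functor}: $\hI$ sends a continuous $\CQ$-distributor $\zeta:(X,\si)\oto(Y,\sj)$ to the arrow $(\zeta,(\zeta^*)^{\nat}|_{\CO(X,\si),\CO(Y,\sj)})$ of $\Arr(\QDist)$ from $\ka_\si$ to $\ka_\sj$. Since $\hK$ is defined to return the first component of such an arrow, $\hK\hI(\zeta)=\zeta$, now typed as a morphism from $(X,(\ka_\si)^*(\ka_\si)_*)$ to $(Y,(\ka_\sj)^*(\ka_\sj)_*)$; the object-level computation identifies this with the original $\zeta:(X,\si)\oto(Y,\sj)$.

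I do not anticipate any genuine obstacle: the proof is purely a matter of unpacking the definitions of $\hK$ and $\hI$ and citing \eqref{ka-si}. Indeed one could argue even more economically by noting that the commutative squares \eqref{K-hK} and \eqref{I-hI} together with the fact, proved in Theorem \ref{I-dv-K}, that $\sK$ is a left inverse of $\sI$ on $(\QInt)^{\co}\subseteq(\QChu)^{\co}$, determine the identity $\hK\hI=1_{\QIntDist}$ tautologically on the image of the embedding $(-)_\nat:(\QInt)^{\co}\to\QIntDist$; the only additional content here is that the argument extends verbatim to arbitrary continuous $\CQ$-distributors, not just graphs of continuous $\CQ$-functors.
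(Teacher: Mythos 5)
Your proposal is correct and follows exactly the route the paper intends: the paper gives no separate proof, stating only that the result is an immediate consequence of Theorem \ref{I-dv-K} (whose Step 1 supplies $\si=(\ka_{\si})^*(\ka_{\si})_*$) together with the definitions of $\hK$ and $\hI$, which is precisely the two-step unpacking you carry out. Your closing remark is also apt: the commutativity of \eqref{K-hK} and \eqref{I-hI} alone would only settle the claim on graphs of continuous $\CQ$-functors, so the direct verification on arbitrary continuous $\CQ$-distributors is the right way to finish.
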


\begin{rem}
As pointed out by the anonymous referee, it is worth considering the comma category
$$\QCat\da(-)^*$$
here, where $(-)^*:(\QDist)^{\op}\to\QCat$ sends each $\CQ$-distributor $\phi:X\oto Y$ to the $\CQ$-functor $\phi^*:\PY\to\PX$. This comma category may be identified with the category having $\CQ$-distributors as objects and pairs $(\zeta:X\oto X',\ g:Y'\to Y)$ consisting of a $\CQ$-distributor and a $\CQ$-functor satisfying
$$\psi\circ\zeta=g^{\nat}\circ\phi$$
as morphisms from $\phi:X\oto Y$ to $\psi:X'\oto Y'$. From Proposition \ref{hI-functor} it is easy to see that $\QIntDist$ can be embedded into $\QCat\da(-)^*$ by sending each continuous $\CQ$-distributor $\zeta:(X,\si)\oto(Y,\sj)$ to $(\zeta,\zeta^*|_{\CO(Y,\sj)}):\ka_{\si}\to\ka_{\sj}$, and analogously to Theorem \ref{I-dv-K} one may prove that $\QIntDist$ is a retract and a coreflective subcategory of ${\QCat\da(-)^*}$.
\end{rem}

\section{Diagonals between $\CQ$-distributors as continuous $\CQ$-distributors} \label{Q-IntDisto}

For continuous $\CQ$-distributors $\zeta,\zeta':(X,\si)\oto(Y,\sj)$ between $\CQ$-interior spaces, we denote by $\zeta\sim\zeta'$ if
\begin{equation} \label{cont-dist-equiv}
\zeta^*\sj=\zeta'^*\sj.
\end{equation}
To see the intuition of \eqref{cont-dist-equiv}, let us consider the case that $\zeta=f_{\nat}$ and $\zeta'=f'_{\nat}$ for some continuous $\CQ$-functors $f,f':(X,\si)\to(Y,\sj)$. Then \eqref{cont-dist-equiv} becomes
$$f^{\la}\sj=f'^{\la}\sj;$$
that is, $f\sim f'$ if the preimages of each open presheaf under $f$ and $f'$ are identical.

It is not difficult to see that ``$\sim$'' gives rise to a congruence on the quantaloid $\QIntDist$, and we denote the induced quotient quantaloid by
$$(\QIntDist)_{\so}:=\QIntDist/\!\sim.$$

\begin{prop} \label{zeta-equiv-K}
For arrows $(\zeta,\eta),(\zeta',\eta'):(\phi:X\oto Y)\to(\psi:X'\oto Y')$ in $\Arr(\QDist)$, the following statements are equivalent:
\begin{enumerate}[label={\rm(\roman*)}]
\item \label{zeta-equiv-K:diag} $(\zeta,\eta)\sim(\zeta',\eta'):\phi\to\psi$.
\item \label{zeta-equiv-K:int} $\zeta\sim\zeta':(X,\phi^*\phi_*)\oto(X',\psi^*\psi_*)$.
\end{enumerate}
\end{prop}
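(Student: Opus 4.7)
The plan is to reduce each condition to a cleaner equality and then translate between them using the basic properties of Kan adjunctions.

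\textbf{Step 1: Unpack the two conditions.} Since $(\zeta,\eta)$ and $(\zeta',\eta')$ are arrows in $\Arr(\QDist)$, the equalities $\psi\circ\zeta=\eta\circ\phi$ and $\psi\circ\zeta'=\eta'\circ\phi$ are automatic. So the diagonal condition $\psi\circ\zeta=\eta\circ\phi=\psi\circ\zeta'=\eta'\circ\phi$ amounts exactly to
$$\psi\circ\zeta=\psi\circ\zeta',$$
which is what (i) says. On the other hand, (ii) just means
$$\zeta^*\psi^*\psi_*=\zeta'^*\psi^*\psi_*:\PX'\to\PX,$$
since the $\CQ$-interior operator on $(X',\psi^*\psi_*)$ is precisely $\psi^*\psi_*$.

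\textbf{Step 2: \ref{zeta-equiv-K:diag}$\Rightarrow$\ref{zeta-equiv-K:int}.} I would evaluate on an arbitrary $\mu'\in\PX'$, using the explicit formulas $\psi_*\mu'=\mu'\lda\psi$, $\psi^*\lam=\lam\circ\psi$, and $\zeta^*\lam=\lam\circ\zeta$, to obtain
$$\zeta^*\psi^*\psi_*\mu'=(\mu'\lda\psi)\circ\psi\circ\zeta.$$
Plugging in $\psi\circ\zeta=\psi\circ\zeta'$ then gives $\zeta^*\psi^*\psi_*\mu'=\zeta'^*\psi^*\psi_*\mu'$, as desired.

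\textbf{Step 3: \ref{zeta-equiv-K:int}$\Rightarrow$\ref{zeta-equiv-K:diag}.} I would compare $\psi\circ\zeta$ and $\psi\circ\zeta'$ pointwise. Using \eqref{tphi-def} and \eqref{tphi-Yoneda},
$$(\psi\circ\zeta)(x,y')=(\psi(-,y')\circ\zeta)(x)=(\zeta^*\tpsi y')(x)=(\zeta^*\psi^*\sy_{Y'}y')(x).$$
The key move is the triangle identity of the Kan adjunction $\psi^*\dv\psi_*$, which forces $\psi^*\psi_*\psi^*=\psi^*$. Hence
$$\zeta^*\psi^*\sy_{Y'}y'=\zeta^*\psi^*\psi_*(\psi^*\sy_{Y'}y')=\zeta'^*\psi^*\psi_*(\psi^*\sy_{Y'}y')=\zeta'^*\psi^*\sy_{Y'}y',$$
where the middle equality invokes the hypothesis (ii). Reading the computation backwards yields $(\psi\circ\zeta)(x,y')=(\psi\circ\zeta')(x,y')$ for every $x\in X$ and $y'\in Y'$.

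The step to watch is Step 3: the trick of sandwiching $\psi^*\psi_*$ between $\zeta^*$ and $\psi^*\sy_{Y'}y'$ is what makes the hypothesis applicable, and it rests on the triangle identity of $\psi^*\dv\psi_*$ together with the identification $\tpsi=\psi^*\sy_{Y'}$. Everything else is straightforward manipulation in the quantaloid $\QDist$.
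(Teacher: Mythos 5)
Your proof is correct and follows essentially the same route as the paper: both directions hinge on reducing (i) to $\psi\circ\zeta=\psi\circ\zeta'$ and (ii) to $\zeta^*\psi^*\psi_*=\zeta'^*\psi^*\psi_*$, and the converse direction uses exactly the paper's key steps, namely the identity $\psi^*\psi_*\psi^*=\psi^*$ from the Kan adjunction together with $\tpsi=\psi^*\sy_{Y'}$ to recover $\psi\circ\zeta$ pointwise. The only cosmetic difference is that in the forward direction you compute explicitly on presheaves where the paper invokes the functoriality of $(-)^*$; both are fine.
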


\begin{proof}
\ref{zeta-equiv-K:diag}$\implies$\ref{zeta-equiv-K:int}: If $\psi\circ\zeta=\psi\circ\zeta'$, then the functoriality of $(-)^*:(\QDist)^{\op}\to\QCat$ ensures that $\zeta^*\psi^*=\zeta'^*\psi^*$. Thus $\zeta^*\psi^*\psi_*=\zeta'^*\psi^*\psi_*$.

\ref{zeta-equiv-K:int}$\implies$\ref{zeta-equiv-K:diag}: If $\zeta^*\psi^*\psi_*=\zeta'^*\psi^*\psi_*$, then $$\zeta^*\psi^*=\zeta^*\psi^*\psi_*\psi^*=\zeta'^*\psi^*\psi_*\psi^*=\zeta'^*\psi^*.$$
It follows from \eqref{tphi-def} and \eqref{tphi-Yoneda} that
$$\psi(-,y')\circ\zeta=\zeta^*\psi^*\sy_{Y'}y'=\zeta'^*\psi^*\sy_{Y'}y'=\psi(-,y')\circ\zeta'$$
for all $y'\in Y'$. Thus $\psi\circ\zeta=\psi\circ\zeta'$.
\end{proof}

Proposition \ref{zeta-equiv-K} indicates that $\hK(\zeta,\eta)=\zeta:(X,\phi^*\phi_*)\oto(X',\psi^*\psi_*)$ is equal to $\hK(\zeta',\eta')=\zeta'$ in $(\QIntDist)_{\so}$ whenever $(\zeta,\eta)\sim(\zeta',\eta'):\phi\to\psi$ in $\Arr(\QDist)$. Hence, the universal property of the quotient quantaloid $\BD(\QDist)=\Arr(\QDist)/\!\sim$ ensures that there is a (unique) quantaloid homomorphism
$$\hKd:\BD(\QDist)\to(\QIntDist)_{\so}$$
making the square
$$\bfig
\square/->`->`-->`->/<1500,500>[\Arr(\QDist)`\BD(\QDist)`\QIntDist`(\QIntDist)_{\so};\sd`\hK`\hKd`\so]
\efig$$
commute, where $\sd$ and $\so$ are the obvious quotient homomorphisms (so that the composition of $\so$ and $\hK$ is also a full quantaloid homomorphism).

\begin{prop} \label{cont-dist-equiv-I}
For continuous $\CQ$-distributors $\zeta,\zeta':(X,\si)\oto(Y,\sj)$ between $\CQ$-interior spaces, the following statements are equivalent:
\begin{enumerate}[label={\rm(\roman*)}]
\item \label{cont-dist-equiv-I:int} $\zeta\sim\zeta':(X,\si)\oto(Y,\sj)$.
\item \label{cont-dist-equiv-I:diag} $(\zeta,(\zeta^*)^{\nat}|_{\CO(X,\si),\CO(Y,\sj)})=(\zeta',(\zeta'^*)^{\nat}|_{\CO(X,\si),\CO(Y,\sj)}):(\ka_{\si}:X\oto\CO(X,\si))\to(\ka_{\sj}:Y\oto\CO(Y,\sj)))$.
\end{enumerate}
\end{prop}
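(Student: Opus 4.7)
The plan is to read the equation in (ii) as equality in $\BD(\QDist)$, i.e., as the $\sim$-relation on $\Arr(\QDist)$ that asserts the two arrows $\hI(\zeta)$ and $\hI(\zeta')$ (already known from Proposition \ref{hI-functor} to sit inside commutative squares between $\ka_{\si}$ and $\ka_{\sj}$) share a common diagonal. Under this reading, (ii) collapses to the single equation $\ka_{\sj}\circ\zeta=\ka_{\sj}\circ\zeta'$, and the proof reduces to computing that diagonal.

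The first step will be to record the elementary identity
\[
(\ka_{\sj}\circ\zeta)(x,\lam)=\bv_{y\in Y}\lam(y)\circ\zeta(x,y)=(\lam\circ\zeta)(x)=\zeta^*\lam(x)
\]
for $x\in X$ and $\lam\in\CO(Y,\sj)$, which just unwinds $\ka_{\sj}(y,\lam)=\lam(y)$ and the definition of composition in $\QDist$. Thus the $\lam$-column of $\ka_{\sj}\circ\zeta$ is precisely $\zeta^*\lam$, and $\ka_{\sj}\circ\zeta=\ka_{\sj}\circ\zeta'$ if and only if $\zeta^*\lam=\zeta'^*\lam$ for every $\lam\in\CO(Y,\sj)$.

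For (i)$\Rightarrow$(ii), every $\lam\in\CO(Y,\sj)$ satisfies $\sj\lam=\lam$, so $\zeta^*\sj=\zeta'^*\sj$ immediately yields $\zeta^*\lam=\zeta'^*\lam$ on all open presheaves, whence $\ka_{\sj}\circ\zeta=\ka_{\sj}\circ\zeta'$ by the column computation. For (ii)$\Rightarrow$(i), the argument runs backwards: from $\ka_{\sj}\circ\zeta=\ka_{\sj}\circ\zeta'$ one recovers $\zeta^*\lam=\zeta'^*\lam$ for every $\lam\in\CO(Y,\sj)$; and since $\sj\mu\in\CO(Y,\sj)$ for every $\mu\in\PY$ (by $\sj\sj=\sj$), evaluating at $\sj\mu$ gives $\zeta^*\sj\mu=\zeta'^*\sj\mu$, i.e., $\zeta^*\sj=\zeta'^*\sj$.

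The only conceptual point — and the one most likely to confuse a reader — is the observation that postcomposition with the idempotent $\sj$ detects exactly the action of $\zeta^*$ on the fixed-point subcategory $\CO(Y,\sj)$ (no more and no less), because the image and the fixed-point set of $\sj$ coincide. Once this is explicit, the entire argument is just the single composition identity above; in particular, no use of the continuity of $\zeta,\zeta'$ is needed beyond guaranteeing that we are working inside $\QIntDist$.
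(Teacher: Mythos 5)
Your argument is correct, and its computational core --- that $\zeta^*\sj=\zeta'^*\sj$ holds if and only if $\zeta^*$ and $\zeta'^*$ agree on all open presheaves, because $\CO(Y,\sj)$ is simultaneously the image and the fixed-point set of the idempotent $\sj$ --- is exactly the fact on which the paper's one-line proof rests. Where you differ is in how you parse statement \ref{cont-dist-equiv-I:diag}: you read the displayed equality as equality of diagonals, i.e.\ $\ka_{\sj}\circ\zeta=\ka_{\sj}\circ\zeta'$, and compute the $\lam$-column $(\ka_{\sj}\circ\zeta)(-,\lam)=\zeta^*\lam$; the paper instead compares the second components of the two pairs, observing that $(\zeta^*)^{\nat}|_{\CO(X,\si),\CO(Y,\sj)}=(\zeta'^*)^{\nat}|_{\CO(X,\si),\CO(Y,\sj)}$ amounts to $\zeta^*|_{\CO(Y,\sj)}=\zeta'^*|_{\CO(Y,\sj)}$. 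The two readings are equivalent (the diagonal of $(\zeta,(\zeta^*)^{\nat}|_{\CO(X,\si),\CO(Y,\sj)})$ is $\ka_{\sj}\circ\zeta=(\zeta^*)^{\nat}|_{\CO(X,\si),\CO(Y,\sj)}\circ\ka_{\si}$), and neither is the literal equality of ordered pairs, which would additionally force $\zeta=\zeta'$ and make \ref{cont-dist-equiv-I:diag} strictly stronger than \ref{cont-dist-equiv-I:int}; your explicit choice of reading is the one under which the subsequent factorization of $\hI$ through $(\QIntDist)_{\so}$ actually goes through, so flagging it was the right instinct. One small caveat: your closing remark that continuity plays no role is true for the equivalence itself, but continuity (via Proposition \ref{cont-dist}\ref{cont-dist:l}) is what makes the second components in \ref{cont-dist-equiv-I:diag} well defined as $\CQ$-distributors $\CO(X,\si)\oto\CO(Y,\sj)$ in the first place.
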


\begin{proof}
Just note that $(\zeta^*)^{\nat}|_{\CO(X,\si),\CO(Y,\sj)}=(\zeta'^*)^{\nat}|_{\CO(X,\si),\CO(Y,\sj)}$ means precisely $\zeta^*|_{\CO(Y,\sj)}=\zeta'^*|_{\CO(Y,\sj)}$, which is an alternative expression of \eqref{cont-dist-equiv}.
\end{proof}

Proposition \ref{cont-dist-equiv-I} shows that $\hI\zeta=\hI\zeta'$ whenever $\zeta\sim\zeta':(X,\si)\oto(Y,\sj)$. The universal property of the quotient quantaloid $(\QIntDist)_{\so}$ then guarantees the existence of a (unique) quantaloid homomorphism
$$\hIo:(\QIntDist)_{\so}\to\Arr(\QDist)$$
making the triangle
$$\bfig
\btriangle/<-`<--`->/<1500,500>[\Arr(\QDist)`\QIntDist`(\QIntDist)_{\so};\hI`\hIo`\so]
\efig$$
commute, and the composition of $\sd$ and $\hIo$ produces a quantaloid homomorphism
$$\hId:(\QIntDist)_{\so}\to\BD(\QDist).$$
$$\bfig
\square/->`@{->}@<-4pt>`@{-->}@<4pt>`->/<1500,500>[\Arr(\QDist)`\BD(\QDist)`\QIntDist`(\QIntDist)_{\so};\sd`\hK`\hKd`\so]
\btriangle|rra|/@{<-}@<4pt>`<--`/<1500,500>[\Arr(\QDist)`\QIntDist`(\QIntDist)_{\so};\hI`\hIo`]
\morphism(1500,0)|l|/@{-->}@<4pt>/<0,500>[(\QIntDist)_{\so}`\BD(\QDist);\hId]
\efig$$

From Proposition \ref{hK-hI-id} and the constructions of $\hKd$ and $\hId$ it is easy to conclude:

\begin{prop} \label{hKd-hId-id}
$\hKd:\BD(\QDist)\to(\QIntDist)_{\so}$ is a left inverse of $\hId:(\QIntDist)_{\so}\to\BD(\QDist)$.
\end{prop}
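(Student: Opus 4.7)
The plan is to do a short diagram chase using Proposition \ref{hK-hI-id} together with the defining commutative diagrams of $\hKd$, $\hIo$ and $\hId$. Since none of the quotient homomorphisms $\sd:\Arr(\QDist)\to\BD(\QDist)$ and $\so:\QIntDist\to(\QIntDist)_{\so}$ affects objects, on objects the composite $\hKd\circ\hId$ agrees with $\hK\circ\hI$, which is the identity by Proposition \ref{hK-hI-id}.

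On morphisms, I would argue as follows. An arrow in $(\QIntDist)_{\so}$ is an equivalence class $\so(\zeta)$ of a continuous $\CQ$-distributor $\zeta:(X,\si)\oto(Y,\sj)$. Unpacking the definitions,
\[
\hId(\so(\zeta))=\sd(\hIo(\so(\zeta)))=\sd(\hI(\zeta)),
\]
where the second equality uses $\hIo\circ\so=\hI$. Applying $\hKd$ and using $\hKd\circ\sd=\so\circ\hK$,
\[
\hKd(\hId(\so(\zeta)))=\hKd(\sd(\hI(\zeta)))=\so(\hK(\hI(\zeta)))=\so(\zeta),
\]
where the last step invokes $\hK\circ\hI=1$ from Proposition \ref{hK-hI-id}. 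Since every morphism in $(\QIntDist)_{\so}$ is of the form $\so(\zeta)$, this shows $\hKd\circ\hId=1_{(\QIntDist)_{\so}}$.

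There is essentially no obstacle here: the proposition is a formal consequence of Proposition \ref{hK-hI-id} and the universal properties of the two quotient quantaloids that were already used to define $\hKd$ and $\hIo$. The only thing to be mindful of is the direction in which the quotient functors act (whether we precompose or postcompose with $\sd$ or $\so$), which is handled by reading off the two commutative diagrams that define $\hKd$ and $\hIo$.
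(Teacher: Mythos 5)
Your proof is correct and is exactly the formal diagram chase the paper has in mind when it states that the proposition follows "from Proposition \ref{hK-hI-id} and the constructions of $\hKd$ and $\hId$"; you have merely written out the details (using $\hIo\circ\so=\hI$, $\hKd\circ\sd=\so\circ\hK$, and the surjectivity of $\so$ on objects and morphisms) that the paper leaves implicit.
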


Note that Propositions \ref{hK-full} and \ref{zeta-equiv-K} guarantee that $\hKd$ is fully faithful, and Proposition \ref{hKd-hId-id} implies that $\hKd$ is surjective on objects. Therefore, we arrive at the main result of this paper:

\begin{thm} \label{hKd-hId-equiv}
$\hKd:\BD(\QDist)\to(\QIntDist)_{\so}$ and $\hId:(\QIntDist)_{\so}\to\BD(\QDist)$ establish an equivalence of quantaloids; hence, $\BD(\QDist)$ and $(\QIntDist)_{\so}$ are equivalent quantaloids.
\end{thm}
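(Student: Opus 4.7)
The plan is to show that $\hKd$ is fully faithful and surjective on objects. Once this is established, the strict identity $\hKd\circ\hId=\id_{(\QIntDist)_{\so}}$ from Proposition \ref{hKd-hId-id} automatically promotes $\hId$ to a quasi-inverse of $\hKd$, yielding the equivalence of quantaloids: for a fully faithful homomorphism with a right inverse on objects, that right inverse is itself fully faithful and is a categorical inverse up to a canonical natural isomorphism.

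Surjectivity on objects is immediate, since every $(X,\si)\in(\QIntDist)_{\so}$ satisfies $(X,\si)=\hKd\hId(X,\si)$ by Proposition \ref{hKd-hId-id}. For fullness, I chase the commutative square $\hKd\circ\sd=\so\circ\hK$. Given any morphism $[\zeta]:\hKd\phi\to\hKd\psi$ in $(\QIntDist)_{\so}$ represented by a continuous $\CQ$-distributor $\zeta:(X,\phi^*\phi_*)\oto(X',\psi^*\psi_*)$, Proposition \ref{hK-full} supplies an arrow $(\zeta,\eta):\phi\to\psi$ in $\Arr(\QDist)$ with $\hK(\zeta,\eta)=\zeta$, so that $\hKd[\sd(\zeta,\eta)]=\so\hK(\zeta,\eta)=\so\zeta=[\zeta]$. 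For faithfulness, suppose $\hKd[\sd(\zeta,\eta)]=\hKd[\sd(\zeta',\eta')]$ for two arrows $(\zeta,\eta),(\zeta',\eta'):\phi\to\psi$ of $\Arr(\QDist)$. Unwinding the quotient $\so$, this says $\zeta\sim\zeta'$ in the sense of \eqref{cont-dist-equiv}, which by Proposition \ref{zeta-equiv-K} is equivalent to the diagonal congruence $(\zeta,\eta)\sim(\zeta',\eta')$ on $\Arr(\QDist)$; hence $\sd(\zeta,\eta)=\sd(\zeta',\eta')$ in $\BD(\QDist)$.

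I do not expect any serious obstacle. The delicate step is matching the two congruences living on different quantaloids, namely the diagonal relation on $\Arr(\QDist)$ and the relation ``$\sim$'' on $\QIntDist$, but this is precisely the content of Proposition \ref{zeta-equiv-K}, while the substantive work of fullness has been carried out in Proposition \ref{hK-full}. All that remains for the present theorem is to invoke the universal properties of the two quotients to see that these facts descend cleanly to $\hKd$ and $\hId$ at the level of equivalence classes.
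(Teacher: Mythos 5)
Your proposal is correct and follows essentially the same route as the paper: fullness and faithfulness of $\hKd$ are obtained from Propositions \ref{hK-full} and \ref{zeta-equiv-K}, surjectivity on objects from Proposition \ref{hKd-hId-id}, and the strict identity $\hKd\hId=\id$ is then used to upgrade $\hId$ to a quasi-inverse. The only difference is cosmetic: you spell out the diagram chase through the quotients $\sd$ and $\so$, which the paper leaves implicit.
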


\begin{proof}
It remains to verify the claim about $\hId$. Indeed, since $\hKd$ is an equivalence of quantaloids, there exists a functor $\sF:(\QIntDist)_{\so}\to\BD(\QDist)$ such that $\sF\hKd$ is naturally isomorphic to the identity functor on $\BD(\QDist)$, thus so is $\hId\hKd$ as there are natural isomorphisms
$$\hId\hKd\cong\sF\hKd\hId\hKd\cong\sF\hKd,$$
showing that $\hId$ is also an equivalence of quantaloids.
\end{proof}

\begin{rem} \label{DQDist-dual-BQDist}
It has been elaborated in \cite[Subsection 1.1]{Shen2016a} that diagonals and back diagonals are dual constructions of each other. Their duality is once again supported by the equivalences of quantaloids
$$\BD(\QDist)\simeq(\QIntDist)_{\so}\quad\text{and}\quad\BB(\QDist)\simeq\QClsCloDist$$
given by \eqref{BQDist-QSup-QClsCloDist} and Theorem \ref{hKd-hId-equiv}, from the topological point of view:
\begin{itemize}
\item a diagonal between $\CQ$-distributors is essentially an equivalence class of continuous $\CQ$-distributors between $\CQ$-interior spaces;
\item a back diagonal between $\CQ$-distributors is essentially an equivalence class of continuous $\CQ$-distributors between $\CQ$-closure spaces.
\end{itemize}
\end{rem}

In the case that $\CQ={\bf 2}$ is the two-element Boolean algebra, $\BD(\Dist)$ is the Freyd completion of the quantaloid $\Dist$ of (pre)ordered sets and distributors, while $\IntDist$ is the quantaloid of ordered interior spaces (i.e., ordered sets $X$ equipped with an interior operator on its down-set lattice) and continuous distributors:

\begin{cor} \label{DDist-IntDisto-equiv}
The Freyd completion $\BD(\Dist)$ of the quantaloid $\Dist$ is equivalent to $\IntDist_{\so}$.
\end{cor}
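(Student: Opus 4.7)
The plan is to derive this corollary as an immediate specialization of Theorem \ref{hKd-hId-equiv} to the case $\CQ = {\bf 2}$, the two-element Boolean algebra viewed as a one-object (hence small) quantaloid. With $\CQ$ so chosen, every construction of Sections \ref{Q-Categories}--\ref{Q-IntDisto} applies without change, and the entire substantive content of the corollary is already packaged in Theorem \ref{hKd-hId-equiv}.

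First, I would record the standard identifications when $\CQ = {\bf 2}$: a ${\bf 2}$-category is precisely a preordered set, a ${\bf 2}$-functor is a monotone map, and a ${\bf 2}$-distributor is a distributor (ideal relation) between preordered sets; hence ${\bf 2}\text{-}\Dist$ coincides with $\Dist$. The presheaf ${\bf 2}$-category $\sP X$ becomes the down-set lattice of $X$, so a ${\bf 2}$-interior operator on $\sP X$ is exactly an interior operator on that lattice, making a ${\bf 2}$-interior space an ordered set equipped with such an operator. Continuous ${\bf 2}$-distributors between such spaces are the classical continuous distributors, and consequently ${\bf 2}\text{-}\IntDist = \IntDist$ as quantaloids.

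Next, I would verify that the congruence $\sim$ of Section \ref{Q-IntDisto}, defined by $\zeta \sim \zeta'$ iff $\zeta^* \sj = \zeta'^* \sj$, reduces under these identifications to the congruence defining $\IntDist_{\so}$ in the statement, whence $({\bf 2}\text{-}\IntDist)_{\so} = \IntDist_{\so}$. Since $\BD(\Dist)$ is by construction (Section \ref{Diagonals}) the Freyd completion of $\Dist$, Theorem \ref{hKd-hId-equiv} applied at $\CQ = {\bf 2}$ then yields the equivalence $\BD(\Dist) \simeq \IntDist_{\so}$. No real obstacle arises; the only care required is in matching conventions between ${\bf 2}$-distributors and classical distributors, which is a routine bookkeeping step.
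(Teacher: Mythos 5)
Your proposal is correct and matches the paper's treatment: the corollary is stated there as an immediate specialization of Theorem~\ref{hKd-hId-equiv} to $\CQ={\bf 2}$, preceded only by the same dictionary you give (preordered sets, down-set lattices as presheaf categories, ordered interior spaces, continuous distributors). Nothing further is needed.
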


\section{Diagonals between $\CQ$-relations as continuous $\CQ$-relations} \label{Q-IntRel}

Note that every set $X$ over $\ob\CQ$ is equipped with a \emph{discrete} $\CQ$-category structure, given by
$$\id_X(x,y)=\begin{cases}
1_{|x|} & \text{if}\ x=y,\\
\bot_{|x|,|y|} & \text{else}
\end{cases}$$
for all $x,y\in X$, where $\bot_{|x|,|y|}$ refers to the bottom element of the complete lattice $\CQ(|x|,|y|)$. A \emph{$\CQ$-relation}
$$\phi:X\rto Y$$
between sets over $\ob\CQ$ is precisely a $\CQ$-distributor
$$\phi:(X,\id_X)\oto(Y,\id_Y).$$
Hence, sets over $\ob\CQ$ and $\CQ$-relations constitute a full subquantaloid of $\QDist$, denoted by
$$\QRel.$$
It is easy to see that a $\CQ$-relation $\al:X\rto X$ defines a $\CQ$-category $(X,\al)$ if
\begin{equation} \label{rel-cat-def}
\id_X\leq\al\quad\text{and}\quad\al\circ\al\leq\al,
\end{equation}
and a $\CQ$-relation $\phi:X\rto Y$ becomes a $\CQ$-distributor $\phi:(X,\al)\oto(Y,\be)$ if
\begin{equation} \label{rel-dist-def}
\be\circ\phi\circ\al\leq\phi.
\end{equation}

\begin{prop} \label{DQDist-DQRel-equiv}
$\BD(\QDist)$ is equivalent to its full subquantaloid $\BD(\QRel)$.
\end{prop}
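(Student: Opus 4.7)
The plan is to prove that the inclusion $\iota:\BD(\QRel)\hookrightarrow\BD(\QDist)$ induced by the inclusion $\QRel\hookrightarrow\QDist$ is an equivalence of quantaloids. Since $\QRel$ sits inside $\QDist$ as a \emph{full} subquantaloid, and since the diagonal congruence on $\Arr(\QDist)$ restricts to the corresponding congruence on $\Arr(\QRel)$, the homomorphism $\iota$ is automatically full and faithful. Hence only essential surjectivity on objects needs to be proved.

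Given a $\CQ$-distributor $\phi:(X,\al)\oto(Y,\be)$, the natural candidate is to strip away the hom-structure: let $X_\sd=(X,\id_X)$ and $Y_\sd=(Y,\id_Y)$ denote the discrete $\CQ$-categories on the underlying sets, and let $\phi_\sd:X_\sd\oto Y_\sd$ be the same assignment $X\times Y\to\CQ$. The distributor condition for $\phi_\sd$ is vacuous on each side since the discrete hom imposes no constraint, so $\phi_\sd$ is a genuine $\CQ$-relation and hence an object of $\BD(\QRel)$. The goal is to exhibit an isomorphism $\phi\cong\phi_\sd$ in $\BD(\QDist)$.

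The key observation is that the hom $\al$, viewed as a $\CQ$-valued map on $X\times X$, satisfies the distributor condition relative to either of $(X,\al)$ or $X_\sd$ on each side; so it serves simultaneously as a $\CQ$-distributor $\al:X_\sd\oto(X,\al)$ and $\al:(X,\al)\oto X_\sd$, and likewise for $\be$. I will verify that both $(\al,\be):\phi\to\phi_\sd$ and $(\al,\be):\phi_\sd\to\phi$ are arrows in $\Arr(\QDist)$; in both directions, the commutative-square condition unfolds to $\phi\circ\al=\phi=\be\circ\phi$, which is precisely the distributor property of $\phi$. Each arrow has diagonal $\phi$.

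The final step, and the main subtlety, is verifying that the two compositions reduce to identities in $\BD(\QDist)$. Using idempotence of the hom ($\al\circ\al=\al$, similarly for $\be$), both roundtrips in $\Arr(\QDist)$ simplify to $(\al,\be)$. On the $\phi$-side this is literally the identity in $\Arr(\QDist)$, because $\al=1_{(X,\al)}^{\nat}$; on the $\phi_\sd$-side the composite $(\al,\be)$ differs from the $\Arr(\QDist)$-identity $(\id_{X_\sd},\id_{Y_\sd})$, but shares the same diagonal $\phi$, so the two become identified in the quotient $\BD(\QDist)$. This is not so much an obstacle as the whole point of passing to the Freyd completion: the assignment $\phi\mapsto\phi_\sd$ is not functorial at the level of $\QDist$ or $\Arr(\QDist)$ (the identities change), yet becomes a well-defined isomorphism after imposing the diagonal congruence.
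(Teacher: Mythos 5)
Your proposal is correct and follows essentially the same route as the paper: the two mediating arrows $(\al,\be):\phi_\sd\to\phi$ and $(\al,\be):\phi\to\phi_\sd$ are exactly the paper's $((1_X)_{\nat},(1_Y)_{\nat})$ and $((1_X)^{\nat},(1_Y)^{\nat})$, the graph and cograph of the identity maps $1_X:(X,\id_X)\to(X,\al)$ and $1_Y:(Y,\id_Y)\to(Y,\be)$, and your treatment of the two round-trips (one literally the identity, the other identified with it via the shared diagonal $\phi$) matches the paper's verification.
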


\begin{proof}
It suffices to show that every $\CQ$-distributor $\phi:(X,\al)\oto(Y,\be)$ is isomorphic to its underlying $\CQ$-relation $\phi:X\rto Y$ in the quantaloid $\BD(\QDist)$. Indeed, it is clear that the identity maps on $X$ and $Y$ are $\CQ$-functorial as
$$1_X:(X,\id_X)\to(X,\al)\quad\text{and}\quad 1_Y:(Y,\id_Y)\to(Y,\be).$$
It is routine to verify that
\begin{align*}
&((1_X)_{\nat},(1_Y)_{\nat}):(\phi:X\rto Y)\to(\phi:(X,\al)\oto(Y,\be))\quad\text{and}\\
&((1_X)^{\nat},(1_Y)^{\nat}):(\phi:(X,\al)\oto(Y,\be))\to(\phi:X\rto Y)
\end{align*}
$$\bfig
\square<1000,400>[(X,\id_X)`(X,\al)`(Y,\id_Y)`(Y,\be);(1_X)_{\nat}`\phi`\phi`(1_Y)_{\nat}]
\square(1800,0)<1000,400>[(X,\al)`(X,\id_X)`(Y,\be)`(Y,\id_Y);(1_X)^{\nat}`\phi`\phi`(1_Y)^{\nat}]
\place(500,0)[\circ] \place(500,400)[\circ] \place(0,200)[\circ] \place(1000,200)[\circ]
\place(2300,0)[\circ] \place(2300,400)[\circ] \place(1800,200)[\circ] \place(2800,200)[\circ]
\efig$$
are arrows in $\QDist$, and satisfy
\begin{align*}
&((1_X)^{\nat},(1_Y)^{\nat})\circ((1_X)_{\nat},(1_Y)_{\nat})=(\al,\be)\sim(\id_X,\id_Y):(\phi:X\rto Y)\to(\phi:X\rto Y),\\
&((1_X)_{\nat},(1_Y)_{\nat})\circ((1_X)^{\nat},(1_Y)^{\nat})=(\al,\be):(\phi:(X,\al)\oto(Y,\be))\to(\phi:(X,\al)\oto(Y,\be)),
\end{align*}
establishing an isomorphism between $\phi:(X,\al)\oto(Y,\be)$ and $\phi:X\rto Y$ in $\BD(\QDist)$.
\end{proof}

Similarly, we denote by
$$\QIntRel\quad\text{and}\quad(\QIntRel)_{\so}$$
the full subquantaloids of $\QIntDist$ and $(\QIntDist)_{\so}$, respectively, whose objects are restricted to $\CQ$-interior spaces $(X,\si)$ with $X$ being discrete.

\begin{prop} \label{QIntDisto-QIntRelo-equiv}
$(\QIntDist)_{\so}$ is equivalent to its full subquantaloid $(\QIntRel)_{\so}$.
\end{prop}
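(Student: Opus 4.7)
The plan is to show that the inclusion $(\QIntRel)_{\so}\hookrightarrow(\QIntDist)_{\so}$ is essentially surjective on objects; full faithfulness is automatic because $(\QIntRel)_{\so}$ is declared to be a full subquantaloid of $(\QIntDist)_{\so}$, so both hom-sets and composition coincide on the restricted objects.

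I would start from an arbitrary object $(X,\si)\in(\QIntDist)_{\so}$ with $X=(X_0,\al)$, and apply the identity $\hK\hI=1_{\QIntDist}$ of Proposition \ref{hK-hI-id} to rewrite $(X,\si)$ as $\hK(\ka_{\si}:X\oto\CO(X,\si))$. Next, I would invoke Proposition \ref{DQDist-DQRel-equiv} applied to the $\CQ$-distributor $\ka_{\si}:X\oto\CO(X,\si)$: it produces in $\BD(\QDist)$ an explicit isomorphism between $\ka_{\si}:X\oto\CO(X,\si)$ and its underlying $\CQ$-relation $\ka_{\si}:X_0\rto\CO(X,\si)_0$, the latter taken with discrete $\CQ$-category structures on both sides.

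The final step is to transport this isomorphism through the equivalence $\hKd:\BD(\QDist)\to(\QIntDist)_{\so}$ of Theorem \ref{hKd-hId-equiv}. Since $\hKd\,\sd=\so\,\hK$ by construction, applying $\hKd$ sends the preceding isomorphism to an isomorphism
\[
(X,\si)\;\cong\;\hKd[\ka_{\si}:X_0\rto\CO(X,\si)_0]
\]
in $(\QIntDist)_{\so}$; the right-hand side has a discrete underlying $\CQ$-category and hence belongs to $(\QIntRel)_{\so}$, which gives essential surjectivity.

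I do not anticipate a serious obstacle: all the discretization work has already been done in Proposition \ref{DQDist-DQRel-equiv}, and Theorem \ref{hKd-hId-equiv} takes care of the translation between $\BD(\QDist)$ and $(\QIntDist)_{\so}$. The only mild care point is that the two Kan adjunctions involved---the one on $(X,\al)$ and the one on the discretization $(X_0,\id_{X_0})$---produce \emph{different} interior operators on $X$ and $X_0$ respectively, but functoriality of $\hKd$ automatically delivers the isomorphism between the two resulting $\CQ$-interior spaces without our having to exhibit it by hand.
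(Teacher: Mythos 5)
Your argument is correct and non-circular (all of Proposition \ref{hK-hI-id}, Proposition \ref{DQDist-DQRel-equiv} and the construction of $\hKd$ precede this statement in the paper), but it is genuinely different from the paper's proof. The paper works entirely by hand inside $(\QIntDist)_{\so}$: given $(X,\al,\si)$ it defines an explicit discrete interior operator $\si_0\mu:=\si(\mu\lda\al)$ on $\sP(X,\id_X)$, checks that $\si_0$ is idempotent and contractive, proves $\CO(X,\al,\si)=\CO(X,\id_X,\si_0)$, and then verifies directly that $(1_X)_{\nat}$ and $(1_X)^{\nat}$ are continuous and mutually inverse up to $\sim$. Your route instead factors the discretization through $\BD(\QDist)$: you write $(X,\si)=\hK\hI(X,\si)$, replace $\ka_{\si}$ by its underlying $\CQ$-relation via Proposition \ref{DQDist-DQRel-equiv}, and push the resulting isomorphism forward along $\hKd$, using that $\hKd[\ka_{\si}:X\oto\CO(X,\si)]=(X,(\ka_{\si})^*(\ka_{\si})_*)=(X,\si)$ by Equation \eqref{ka-si}. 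Two remarks. First, you need less than you invoke: only the existence and functoriality of the quantaloid homomorphism $\hKd$ (homomorphisms preserve isomorphisms), not the full equivalence of Theorem \ref{hKd-hId-equiv}. Second, what each approach buys: yours is shorter and reuses the machinery of Sections 5--6, at the cost of making this proposition depend on that machinery and of describing the discrete replacement only implicitly as $(X_0,(\ka_{\si})^*(\ka_{\si})_*)$ with the Kan adjunction computed discretely; the paper's proof is self-contained, keeps the proposition independent of Theorem \ref{hKd-hId-equiv}, and yields the concrete formula $\si_0\mu=\si(\mu\lda\al)$ (which, if you unwind your construction using $\ka_{\si}\circ\al=\ka_{\si}$, is in fact the same operator, and the isomorphism produced is the same pair $((1_X)_{\nat},(1_X)^{\nat})$).
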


\begin{proof}
Suppose that $(X,\al)$ is a $\CQ$-category, i.e., $\al:X\rto X$ is a $\CQ$-relation satisfying \eqref{rel-cat-def}. If $\si:\sP(X,\al)\to\sP(X,\al)$ is a $\CQ$-interior operator, then
$$\si_0:\sP(X,\id_X)\to\sP(X,\id_X),\quad\si_0\mu:=\si(\mu\lda\al)$$
defines a $\CQ$-interior operator on $\sP(X,\id_X)$. Indeed, $\si_0\leq 1_{\sP(X,\id_X)}$ since
$$\si_0\mu=\si(\mu\lda\al)\leq\mu\lda\al\leq\mu\lda\id_X=\mu$$
for all $\mu\in\sP(X,\id_X)$. As for $\si_0=\si_0\si_0$, note that for any $\mu\in\sP(X,\id_X)$, $\si_0\mu\in\sP(X,\al)$ implies that $\si_0\mu=\si_0\mu\lda\al$, and $\si_0\mu\in\CO(X,\al,\si)$ implies that $\si\si_0\mu=\si_0\mu$. Thus
$$\si_0\mu=\si\si_0\mu=\si(\si_0\mu\lda\al)=\si_0\si_0\mu.$$

Now it suffices to show that $(X,\al,\si)$ is isomorphic to $(X,\id_X,\si_0)$ in the quantaloid $(\QIntDist)_{\so}$. Note that
\begin{equation} \label{OXa=OXa0}
\CO(X,\al,\si)=\CO(X,\id_X,\si_0).
\end{equation}
Indeed, on one hand, $\mu\in\CO(X,\al,\si)$ implies that $\si_0\mu=\si(\mu\lda\al)=\si\mu=\mu$, i.e., $\mu\in\CO(X,\id_X,\si_0)$. On the other hand, $\mu\in\CO(X,\id_X,\si_0)$ necessarily forces $\mu=\si_0\mu=\si(\mu\lda\al)\in\CO(X,\al,\si)$.

Since $1_X:(X,\id_X)\to(X,\al)$ is a $\CQ$-functor, its graph and cograph
$$(1_X)_{\nat}:(X,\id_X,\si_0)\oto(X,\al,\si),\quad (1_X)^{\nat}:(X,\al,\si)\oto(X,\id_X,\si_0)$$
are clearly continuous $\CQ$-distributors by \eqref{OXa=OXa0}, and satisfy
\begin{align*}
&(1_X)^{\nat}\circ(1_X)_{\nat}=\al\sim\id_X:(X,\id_X,\si_0)\oto(X,\id_X,\si_0),\\
&(1_X)_{\nat}\circ(1_X)^{\nat}=\al:(X,\al,\si)\oto(X,\al,\si),
\end{align*}
establishing an isomorphism between $(X,\al,\si)$ and $(X,\id_X,\si_0)$ in $(\QIntDist)_{\so}$.
\end{proof}

From Theorem \ref{hKd-hId-equiv} and Propositions \ref{DQDist-DQRel-equiv}, \ref{QIntDisto-QIntRelo-equiv} we soon deduce that:

\begin{thm} \label{DRel-QIntRelo-equiv}
$\BD(\QRel)$ and $(\QIntRel)_{\so}$ are equivalent quantaloids.
\end{thm}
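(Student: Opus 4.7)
The plan is to prove this theorem as an immediate corollary by chaining the three equivalences of quantaloids already established. First I would invoke Proposition \ref{DQDist-DQRel-equiv}, which gives $\BD(\QRel) \simeq \BD(\QDist)$ via the inclusion of the full subquantaloid (noting that every $\CQ$-distributor is isomorphic in $\BD(\QDist)$ to its underlying $\CQ$-relation). Next I would apply Theorem \ref{hKd-hId-equiv} for the equivalence $\BD(\QDist) \simeq (\QIntDist)_{\so}$ realized by $\hKd$ and $\hId$. Finally, Proposition \ref{QIntDisto-QIntRelo-equiv} yields $(\QIntDist)_{\so} \simeq (\QIntRel)_{\so}$ via another full embedding. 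Composing these three equivalences then produces the desired $\BD(\QRel) \simeq (\QIntRel)_{\so}$.

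For concreteness, I would also point out that the composite equivalence can be described directly as a restriction of $\hKd$. Indeed, for a $\CQ$-relation $\phi : X \rto Y$, viewed as a $\CQ$-distributor between discrete $\CQ$-categories, the image $\hKd(\phi) = (X, \phi^*\phi_*)$ already lies in $(\QIntRel)_{\so}$ since its underlying $\CQ$-category is discrete. Hence $\hKd$ restricts to a functor $\BD(\QRel) \to (\QIntRel)_{\so}$; fullness and faithfulness are inherited from $\hKd$, while essential surjectivity on objects follows by combining the constructions in Propositions \ref{DQDist-DQRel-equiv} and \ref{QIntDisto-QIntRelo-equiv}: every object $(X, \id_X, \si) \in (\QIntRel)_{\so}$ is isomorphic in $(\QIntDist)_{\so}$ to $\hKd(\phi)$ for some $\CQ$-distributor $\phi$, which in $\BD(\QDist)$ is in turn isomorphic to its underlying $\CQ$-relation.

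The main obstacle is essentially absent: the theorem is bookkeeping in the wake of Theorem \ref{hKd-hId-equiv}. The only point worth verifying is that the composition of equivalences of quantaloids is again an equivalence of quantaloids, but this is immediate because each component is a homomorphism of quantaloids (i.e., preserves joins of arrows), and this property is stable under composition.
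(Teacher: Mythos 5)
Your proposal is correct and matches the paper's own argument: the theorem is deduced immediately by composing the equivalences of Proposition \ref{DQDist-DQRel-equiv}, Theorem \ref{hKd-hId-equiv}, and Proposition \ref{QIntDisto-QIntRelo-equiv}. The extra remarks on restricting $\hKd$ are consistent but not needed.
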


In the case that $\CQ={\bf 2}$, $\BD(\Rel)$ is precisely the Freyd completion of the quantaloid $\Rel$ of sets and relations, while $\IntRel$ is the quantaloid of (classical) interior spaces (i.e., sets $X$ equipped with an interior operator on its powerset ${\bf 2}^X$) and continuous relations:

\begin{cor} \label{Rel-IntRelo-equiv}
The Freyd completion $\BD(\Rel)$ of the quantaloid $\Rel$ is equivalent to $\IntRel_{\so}$.
\end{cor}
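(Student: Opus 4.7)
The plan is to obtain the corollary as a direct specialization of Theorem \ref{DRel-QIntRelo-equiv} to the two-element Boolean algebra $\CQ=\mathbf{2}$, viewed as a one-object quantaloid. The work consists entirely in matching the $\CQ$-enriched notions in that theorem with the classical notions named in the statement; no fresh construction is required.

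First I would unpack $\QRel$ when $\CQ=\mathbf{2}$. Since $\ob\mathbf{2}$ is a singleton, a set over $\ob\mathbf{2}$ is an ordinary set, a $\mathbf{2}$-relation between such sets is a usual Boolean-valued relation, and the composition and local order in $\QRel$ specialize to relational composition and pointwise inclusion. Hence $\QRel=\Rel$ as quantaloids, and consequently $\BD(\QRel)=\BD(\Rel)$, which by the general construction in Section \ref{Diagonals} is the Freyd completion of $\Rel$.

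Next I would verify that $(\QIntRel)_{\so}=\IntRel_{\so}$ under the same specialization. For a discrete $\mathbf{2}$-category $X$, the presheaf $\CQ$-category $\sP(X,\id_X)$ is the powerset $\mathbf{2}^X$ ordered by inclusion, so a $\mathbf{2}$-interior operator $\si\colon\mathbf{2}^X\to\mathbf{2}^X$, being a monotone deflation with $\si\si=\si$, is exactly a classical interior operator on $X$; thus objects of $\QIntRel$ are (classical) interior spaces. A continuous $\mathbf{2}$-relation $\zeta\colon(X,\si)\oto(Y,\sj)$ satisfies, by Proposition \ref{cont-dist}\ref{cont-dist:l}, that $\zeta^*\lam$ is open in $(X,\si)$ whenever $\lam$ is open in $(Y,\sj)$; since $\zeta^*$ is the preimage operator along the relation $\zeta$, this is precisely the usual notion of a continuous relation between interior spaces. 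Finally, the congruence $\zeta\sim\zeta'\iff\zeta^*\sj=\zeta'^*\sj$ defining $(\QIntDist)_{\so}$ specializes to identifying two continuous relations that induce the same preimage map on open sets, which is the congruence used to form $\IntRel_{\so}$.

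Having made these identifications, Theorem \ref{DRel-QIntRelo-equiv} applied with $\CQ=\mathbf{2}$ yields the claimed equivalence $\BD(\Rel)\simeq\IntRel_{\so}$. The only step requiring any care is the matching in the previous paragraph, and that amounts to a routine unpacking of the $\mathbf{2}$-enriched definitions; no categorical obstruction arises.
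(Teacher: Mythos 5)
Your proposal is correct and follows exactly the route the paper intends: the corollary is stated as an immediate specialization of Theorem \ref{DRel-QIntRelo-equiv} to $\CQ=\mathbf{2}$, with the identifications $\QRel=\Rel$ and $(\QIntRel)_{\so}=\IntRel_{\so}$ carried out in the sentence preceding the corollary. Your unpacking of the $\mathbf{2}$-enriched notions (powerset as presheaf category, deflationary idempotent monotone map as interior operator in the paper's sense, $\zeta^*$ as relational preimage, and the congruence on preimages of opens) matches the paper's reading precisely.
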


Let $\BD(\Rel)_{\sf f}$ denote the full subquantaloid of $\BD(\Rel)$ whose objects are relations $\phi:X\rto Y$ such that $\tphi:Y\to{\bf 2}^X$ is injective and that
$$\sIm\tphi=\{\tphi y\mid y\in Y\}=\{\{x\in X\mid x\phi y\}\mid y\in Y\}$$
is closed under arbitrary union and finite intersection. Then, by Remark \ref{Pratt}, $\BD(\Rel)_{\sf f}$ is clearly equivalent to the full subquantaloid $\TopRel_{\so}$ of $\IntRel_{\so}$ consisting of topological spaces:

\begin{cor} \label{Relf-TopRelo-equiv}
$\BD(\Rel)_{\sf f}$ and $\TopRel_{\so}$ are equivalent quantaloids.
\end{cor}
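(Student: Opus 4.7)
The plan is to deduce this corollary by restricting the equivalence $\BD(\Rel) \simeq \IntRel_{\so}$ of Corollary~\ref{Rel-IntRelo-equiv} to the full subquantaloids $\BD(\Rel)_{\sf f}$ and $\TopRel_{\so}$. Because both are \emph{full} subquantaloids and $\hKd$ is fully faithful, this restriction is automatically fully faithful; consequently, the only task is to verify the object-level correspondence in both directions.

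For the forward direction, given $\phi:X \rto Y$ in $\BD(\Rel)_{\sf f}$, I would compute $\CO(X, \phi^*\phi_*)$ explicitly. Since $Y$ is discrete, each presheaf $\lam \in \PY$ is just a subset of $Y$, and $\phi^*\lam = \bigcup_{y \in \lam} \tphi(y)$. Hence $\CO(X, \phi^*\phi_*) = \phi^*(\PY)$ is the closure of $\sIm \tphi$ under arbitrary unions in ${\bf 2}^X$. By hypothesis, $\sIm \tphi$ is already union-closed (the empty union yielding $\emptyset$) and closed under finite intersections (the empty intersection yielding $X$), so $\CO(X, \phi^*\phi_*) = \sIm \tphi$ is a topology on $X$; that is, $\hKd(\phi) \in \TopRel_{\so}$.

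For essential surjectivity, given $(X, \tau) \in \TopRel_{\so}$, I would take the relation $\phi_{\tau}: X \rto \tau$ defined by $x \,\phi_{\tau}\, U \iff x \in U$ (with $\tau$ now regarded as a plain set). Then $\tphi_{\tau}: \tau \to {\bf 2}^X$ is the set-theoretic inclusion, which is injective, with image exactly $\tau$; thus $\phi_{\tau} \in \BD(\Rel)_{\sf f}$, and by the previous paragraph $\hKd(\phi_{\tau}) = (X, \tau)$ as a topological space (the operator $\phi_{\tau}^{*}(\phi_{\tau})_{*}$ coincides with the topological interior operator associated to $\tau$).

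The main obstacle I anticipate is a subtle discrepancy between Remark~\ref{Pratt}, which assumes $\tphi$ to be \emph{fully faithful}, and the definition of $\BD(\Rel)_{\sf f}$, which only requires $\tphi$ to be injective. In the ${\bf 2}$-enriched setting with $Y$ discrete, full faithfulness of $\tphi$ would force the subsets $\tphi(y) \subseteq X$ to be pairwise incomparable, a condition that typically fails for the open sets of a topology. I will therefore not appeal to Remark~\ref{Pratt} verbatim but instead prove the identity $\CO(X, \phi^*\phi_*) = \sIm \tphi$ directly from the explicit formula for $\phi^*$ sketched above, which uses only the union-closure of $\sIm \tphi$ and not the full faithfulness of $\tphi$.
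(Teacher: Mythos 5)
Your proposal is correct, and at the top level it follows the same route as the paper: restrict the equivalence $\BD(\Rel)\simeq\IntRel_{\so}$ of Corollary~\ref{Rel-IntRelo-equiv} to the two full subquantaloids, so that only the object-level correspondence needs checking. Where you differ is in how that correspondence is justified. The paper simply invokes Remark~\ref{Pratt}, whose hypotheses ask $\tphi$ to be \emph{fully faithful}; you correctly observe that for a discrete $Y$ this would force $\sIm\tphi$ to be an antichain in ${\bf 2}^X$, which fails for any nontrivial topology (e.g.\ $\emptyset$ and $X$ are comparable open sets), so the remark cannot be applied verbatim to the objects of $\BD(\Rel)_{\sf f}$ as defined. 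Your replacement --- the direct computation $\phi^*\lam=\bigcup_{y\in\lam}\tphi y$, hence $\CO(X,\phi^*\phi_*)=\sIm\phi^*$ is the union-closure of $\sIm\tphi$, which equals $\sIm\tphi$ under the hypotheses defining $\BD(\Rel)_{\sf f}$ --- is exactly the identity the paper needs, and it uses neither injectivity nor full faithfulness of $\tphi$; together with the converse construction $\phi_\tau:X\rto\tau$ (whose transpose is the inclusion $\tau\hookrightarrow{\bf 2}^X$, so that $\phi_\tau^*(\phi_\tau)_*$ has fixed points exactly $\tau$ and is therefore the interior operator of $\tau$) it completes the argument. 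The paper's intended reading of Remark~\ref{Pratt} is presumably after transporting $\phi$ along the isomorphism of Proposition~\ref{DQDist-DQRel-equiv} to a distributor into $Y$ equipped with the order induced by $\tphi$, where full faithfulness does hold; your version avoids this detour and is the cleaner justification.
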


\section{When $\CQ$ is a Girard quantaloid} \label{Q-Girard}

Given a quantaloid $\CQ$ and a family of $\CQ$-arrows $\FD=\{d_q:q\to q\}_{q\in\ob\CQ}$, we say that
\begin{itemize}
\item $\FD$ is a \emph{cyclic family}, if $d_p\lda u=u\rda d_q$ for all $\CQ$-arrows $u:p\lra q$;
\item $\FD$ is a \emph{dualizing family}, if $(d_p\lda u)\rda d_p=u=d_q\lda(u\rda d_q)$ for all $\CQ$-arrows $u:p\lra q$.
\end{itemize}
$\CQ$ is a \emph{Girard quantaloid} \cite{Rosenthal1992} if it is equipped with a cyclic dualizing family of $\CQ$-arrows. In this case, the \emph{complement} of a $\CQ$-arrow $u:p\to q$ is defined as
$$\neg u=d_p\lda u=u\rda d_q:q\lra p,$$
which clearly satisfies $\neg\neg u=u$, and it is straightforward to check that:

\begin{prop} \label{Girard-comp}
If $\CQ$ is a Girard quantaloid, then
$$v\circ u=\neg(\neg u\lda v)=\neg(u\rda\neg v)$$
for all $\CQ$-arrows $u:p\to q$, $v:q\to r$.
\end{prop}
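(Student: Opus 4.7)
The plan is to reduce both equalities to standard currying identities that hold in any quantaloid, then apply the involution $\neg\neg u = u$ (which is immediate from the dualizing condition). Since $\neg u = d_p \lda u = u \rda d_q$ is a two-sided description of the complement, I can pick whichever expression best matches the direction in which I need to massage things.

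For the identity $v \circ u = \neg(\neg u \lda v)$, I would first establish the auxiliary equation $\neg u \lda v = \neg(v \circ u)$. The computation is pure formal nonsense: expand $\neg u = d_p \lda u$ and apply the standard quantaloid identity $w \lda (v \circ u) = (w \lda u) \lda v$, which itself is a one-line consequence of the adjunction $( - \circ v) \dashv (- \lda v)$ together with the associativity of composition (namely $x \leq (w \lda u) \lda v$ iff $(x \circ v) \circ u \leq w$ iff $x \leq w \lda (v \circ u)$). This gives $\neg u \lda v = (d_p \lda u) \lda v = d_p \lda (v \circ u) = \neg(v \circ u)$, and applying $\neg$ to both sides together with $\neg\neg = \id$ delivers the first equality.

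For the identity $v \circ u = \neg(u \rda \neg v)$, I proceed symmetrically: expand $\neg v = v \rda d_r$ and use the dual currying identity $(v \circ u) \rda w = u \rda (v \rda w)$, which in turn follows from $(v \circ -) \dashv (v \rda -)$ and associativity. This yields $u \rda \neg v = u \rda (v \rda d_r) = (v \circ u) \rda d_r = \neg(v \circ u)$, and one more application of $\neg$ gives the desired equality.

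The only real bookkeeping is tracking types: $\neg$ reverses the direction of arrows, so I need to verify at each step that the hom-sets line up (for instance, $\neg u \lda v$ lives in $\CQ(r,p)$, while $v \circ u$ lives in $\CQ(p,r)$, so that the outer $\neg$ is indeed applied to an arrow of the correct type). Once this bookkeeping is done, the proposition is essentially a formal corollary of the adjunctions defining $\lda$ and $\rda$ in any quantaloid, combined with the defining property $\neg\neg u = u$ of the dualizing family; I do not anticipate any substantive obstacle.
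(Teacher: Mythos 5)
Your proof is correct. The paper states this proposition without proof (``it is straightforward to check''), and your argument---reducing each equality to the currying identities $w\lda(v\circ u)=(w\lda u)\lda v$ and $(v\circ u)\rda w=u\rda(v\rda w)$ instantiated at $w=d_p$ and $w=d_r$, then applying $\neg\neg=\mathrm{id}$ from the dualizing condition---is exactly the intended verification, with the type bookkeeping handled properly.
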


The aim of this section is to show that, in the case that $\CQ$ is a small Girard quantaloid, we are able to concatenate the equivalences given by \eqref{BQDist-QSup-QClsCloDist} and Theorem \ref{hKd-hId-equiv}.

Recall that each quantaloid $\CQ$ induces a quantaloid $\ChuCon(\CQ)$, whose objects are $\CQ$-arrows and whose morphisms are \emph{Chu connections} \cite{Shen2016a} $(s,t):(u:p\to q)\to(v:p'\to q')$, i.e., pairs ($s:p\to p',\ t:q\to q'$) of $\CQ$-arrows satisfying
$$u\lda s=t\rda v.$$
$$\bfig
\square<1000,500>[p`p'`q`q';s`u`v`t]
\morphism(1000,500)|r|/-->/<-1000,-500>[p'`q;u\lda s=t\rda v]
\efig$$
For Chu connections $(s,t),(s',t'):u\to v$, we denote by $(s,t)\sim(s',t')$ if the squares
$$\bfig
\square<1000,500>[p`p'`q`q';s`u`v`t]
\square(1700,0)<1000,500>[p`p'`q`q';s'`u`v`t']
\morphism(1000,500)|r|/-->/<-1000,-500>[p'`q;u\lda s=t\rda v]
\morphism(2700,500)|r|/-->/<-1000,-500>[p'`q;u\lda s'=t'\rda v]
\efig$$
generate the same \emph{back diagonal}; that is, if
$$u\lda s=t\rda v=u\lda s'=t'\rda v.$$
``$\sim$'' gives rise to a congruence on $\ChuCon(Q)$, and the induced quotient quantaloid, denoted by
$$\BQ:=\ChuCon(\CQ)/\!\sim,$$
is called the quantaloid of \emph{back diagonals} \cite{Shen2016a} of $\CQ$.

\begin{prop} \label{Girard-Arr-ChuCon}
If $\CQ$ is a Girard quantaloid, then $\Arr(\CQ)$ and $\ChuCon(\CQ)$ are isomorphic quantaloids and, consequently, $\DQ$ and $\BQ$ are isomorphic quantaloids.
\end{prop}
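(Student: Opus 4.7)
The plan is to construct a quantaloid isomorphism $F\colon\Arr(\CQ)\to\ChuCon(\CQ)$ using the involution $\neg$. On objects, set $F(u\colon p\to q)=\neg u\colon q\to p$; on a morphism $(s\colon p\to p',\,t\colon q\to q')\colon u\to v$ in $\Arr(\CQ)$, set $F(s,t)=(t,s)$. The swap has exactly the correct types to be a candidate morphism $\neg u\to\neg v$ in $\ChuCon(\CQ)$, since $\neg u\colon q\to p$ and $\neg v\colon q'\to p'$.

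The central verification is that $(s,t)\colon u\to v$ is an arrow in $\Arr(\CQ)$ if and only if $(t,s)\colon\neg u\to\neg v$ is a Chu connection. Applying $\neg$ to the arrow condition $v\circ s=t\circ u$ and using Proposition \ref{Girard-comp} in the two forms $v\circ s=\neg(s\rda\neg v)$ and $t\circ u=\neg(\neg u\lda t)$, together with the dualizing identity $\neg\neg=\id$, rewrites that condition as
\[
s\rda\neg v=\neg u\lda t,
\]
which is precisely the Chu connection condition for $(t,s)\colon\neg u\to\neg v$. Since the rewriting is reversible, the swap is a bijection on hom-sets; and because $\neg\neg=\id$, $F$ is also a bijection on objects.

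Composition and identities in both $\Arr(\CQ)$ and $\ChuCon(\CQ)$ are componentwise, so $F$ preserves them. The local order inherited from $\CQ$ is componentwise on both sides, so the swap respects it, and joins are also componentwise in both: on the $\Arr$-side this is immediate from distributivity of $\circ$ over joins, and on the $\ChuCon$-side it follows because $\lda$ and $\rda$ convert joins in the appropriate argument into meets, so $(\bigvee s_i, \bigvee t_i)$ still satisfies the Chu connection equality. Thus $F$ preserves joins as well, and is an isomorphism of quantaloids.

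For the induced isomorphism $\DQ\cong\BQ$, I then check that $F$ carries the congruence defining $\DQ$ onto that defining $\BQ$. The condition $v\circ s=t\circ u=v\circ s'=t'\circ u$ on two parallel arrows in $\Arr(\CQ)$ becomes, via the same $\neg$-rewrite termwise,
\[
s\rda\neg v=\neg u\lda t=s'\rda\neg v=\neg u\lda t',
\]
which is exactly the back-diagonal congruence on the images $F(s,t)=(t,s)$ and $F(s',t')=(t',s')$ in $\ChuCon(\CQ)$. Hence $F$ descends to an isomorphism of quotient quantaloids $\DQ\cong\BQ$. The one delicate point to confirm is that the local order and joins on $\ChuCon(\CQ)$ are indeed componentwise, as claimed; once that is secured the remaining verifications are routine.
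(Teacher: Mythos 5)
Your proposal is correct and follows essentially the same route as the paper: the key step in both is to use Proposition \ref{Girard-comp} together with $\neg\neg u=u$ to rewrite $v\circ s=t\circ u$ as $\neg u\lda t=s\rda\neg v$, so that $(s,t)\mapsto(t,s)$, $u\mapsto\neg u$ gives the isomorphism $\Arr(\CQ)\cong\ChuCon(\CQ)$, which visibly matches the diagonal congruence with the back-diagonal congruence. Your extra verifications (componentwise joins on both sides, descent to the quotients) are details the paper leaves as ``clear'', and they check out.
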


\begin{proof}
Given $\CQ$-arrows $u:p\to q$, $v:p'\to q'$ and a pair ($s:p\to p',\ t:q\to q'$) of $\CQ$-arrows, it follows from Proposition \ref{Girard-comp} that
$$v\circ s=t\circ u\iff\neg u\lda t=s\rda\neg v;$$
$$\bfig
\square<1000,500>[p`p'`q`q';s`u`v`t]
\square(1700,0)<1000,500>[q`q'`p`p';t`\neg u`\neg v`s]
\morphism(0,500)|r|/-->/<1000,-500>[p`q';v\circ s=t\circ u]
\morphism(2700,500)|r|/-->/<-1000,-500>[q'`p;\neg u\lda t=s\rda\neg v]
\place(1350,250)[\iff]
\efig$$
that is, $(s,t):u\to v$ is an arrow in $\Arr(\CQ)$ if, and only if, $(t,s):\neg u\to\neg v$ is a Chu connection. Hence, the assignment $((s,t):u\to v)\mapsto((t,s):\neg u\to\neg v)$ defines an isomorphism of quantaloids
$$\neg:\Arr(\CQ)\to\ChuCon(\CQ)$$
which, clearly, also renders an isomorphism $\neg:\DQ\to\BQ$.
\end{proof}

\begin{rem}
The condition of $\CQ$ being Girard is indispensable for the isomorphism $\DQ\cong\BQ$. In the case that $\sQ$ is a commutative quantale, it is already known from \cite[Theorem 5.18]{Lai2020} that there is an isomorphism $\BD(\sQ)\cong\BB(\sQ)$ of quantaloids if, and only if, $\sQ$ is a \emph{Girard quantale} \cite{Rosenthal1990,Yetter1990}, i.e., a one-object Girard quantaloid.
\end{rem}

If $\CQ$ is a small Girard quantaloid and $X$ is a $\CQ$-category, then
$$(\neg 1_X^{\nat})(y,x)=\neg 1_X^{\nat}(x,y)$$
defines a $\CQ$-distributor $\neg 1_X^{\nat}:X\oto X$, and it is straightforward to check that
$$\{\neg 1_X^{\nat}:X\oto X\}_{X\in\ob(\QDist)}$$
is a cyclic dualizing family of $\QDist$. In fact:

\begin{prop} {\rm\cite{Rosenthal1992}} \label{QDist-Girard}
A small quantaloid $\CQ$ is Girard if, and only if, $\QDist$ is a Girard quantaloid.
\end{prop}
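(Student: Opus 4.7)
The plan is to prove both implications. For the easier direction, assuming $\QDist$ is Girard with cyclic dualizing family $\{D_X : X \oto X\}_X$, I would restrict to the one-object discrete $\CQ$-categories $\{q\}$ for $q \in \ob\CQ$. Since a $\CQ$-distributor $\{p\} \oto \{q\}$ is nothing but a $\CQ$-arrow $p \to q$, the full subquantaloid on these objects is isomorphic to $\CQ$, and the family $\{d_q := D_{\{q\}}\}_{q \in \ob\CQ}$ pulls back to a cyclic dualizing family for $\CQ$.

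For the converse, I would take the candidate family suggested by the surrounding discussion: for each $\CQ$-category $X = (X, \al)$, define $D_X : X \oto X$ by $D_X(y, x) := \neg\al(x, y)$, which lies in $\CQ(|y|, |x|)$ as required. The proof then splits into three steps. First, I would verify that $D_X$ is a genuine $\CQ$-distributor; at the component level the inequality $\al \circ D_X \circ \al \leq D_X$ reduces to
\[
\al(z, x) \circ \neg\al(z, z') \circ \al(y, z') \leq \neg\al(x, y)
\]
for all $x, y, z, z' \in X$, which I would establish by two successive applications of the identities $v \circ u = \neg(\neg u \lda v) = \neg(u \rda \neg v)$ from Proposition~\ref{Girard-comp}, combined with the transitivity inequalities $\al(x, z') \circ \al(z, x) \leq \al(z, z')$ and $\al(y, z') \circ \al(x, y) \leq \al(x, z')$ of $X$.

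Second, I would establish the cyclic property in the sharp form
\[
(D_X \lda \phi)(y, x) = \neg\phi(x, y) = (\phi \rda D_Y)(y, x)
\]
for any $\CQ$-distributor $\phi : X \oto Y$. To compute $(D_X \lda \phi)(y, x) = \bw_{x'} \neg\al(x, x') \lda \phi(x', y)$, each term can be rewritten as $d_{|x|} \lda (\phi(x', y) \circ \al(x, x'))$ via the standard identity $(a \lda b) \lda c = a \lda (c \circ b)$. The distributor axiom $\phi(x', y) \circ \al(x, x') \leq \phi(x, y)$ supplies the uniform lower bound $\neg\phi(x, y)$ across all $x'$, while specializing to $x' = x$ and invoking $\al(x, x) \geq 1_{|x|}$ supplies the matching upper bound. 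The formula for $(\phi \rda D_Y)(y, x)$ is dual, using $\neg u = u \rda d_{|y|}$ and $c \rda (a \rda b) = (a \circ c) \rda b$. The dualizing property $(D_X \lda \phi) \rda D_X = \phi = D_Y \lda (\phi \rda D_Y)$ then follows immediately by applying the same formula a second time and using $\neg\neg = 1$.

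The main obstacle is Step~1 of the converse direction; it is a bookkeeping challenge in which the types of the Girard and triangle inequalities must line up precisely for the cancellations to go through. Once this is settled, the cyclic identity collapses to the clean formula $(D_X \lda \phi)(y, x) = \neg\phi(x, y)$, and the dualizing property is then almost free.
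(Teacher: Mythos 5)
Your proposal is correct and follows essentially the same route as the paper: the candidate family you define, $D_X(y,x)=\neg\al(x,y)$, is exactly the family $\{\neg 1_X^{\nat}:X\oto X\}$ that the paper exhibits immediately before the proposition and whose verification it defers to Rosenthal's work, and your pointwise computation $(D_X\lda\phi)(y,x)=\neg\phi(x,y)=(\phi\rda D_Y)(y,x)$ together with the restriction argument for the easy direction (the full subquantaloid of $\QDist$ on the one-object discrete $\CQ$-categories is isomorphic to $\CQ$, and implications in a full subquantaloid are computed as in the ambient quantaloid) correctly supplies the details the paper omits.
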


Therefore, Propositions \ref{Girard-Arr-ChuCon} and \ref{QDist-Girard} in conjunction with \eqref{BQDist-QSup-QClsCloDist} and Theorem \ref{hKd-hId-equiv} give rise to the following equivalences:

\begin{thm} \label{Girard-equiv}
If $\CQ$ is a small Girard quantaloid, then there are equivalences of quantaloids
$$\BD(\QDist)\simeq\BB(\QDist)\simeq(\QIntDist)_{\so}\simeq\QClsCloDist\simeq(\QSup)^{\op}.$$
\end{thm}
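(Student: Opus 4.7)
The plan is to assemble this chain of equivalences by concatenating four already-established results, with the Girard hypothesis serving only to bridge the diagonal side $\BD(\QDist)$ with the back-diagonal side $\BB(\QDist)$.

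First I would invoke Proposition \ref{QDist-Girard} to conclude that, since $\CQ$ is a small Girard quantaloid, the quantaloid $\QDist$ of $\CQ$-distributors is itself a (large) Girard quantaloid, with cyclic dualizing family $\{\neg 1_X^{\nat}:X\oto X\}_{X\in\ob(\QDist)}$. This is the only place the Girard hypothesis is used; from here on everything follows purely from general quantaloid machinery.

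Next I would apply Proposition \ref{Girard-Arr-ChuCon} with the ambient quantaloid taken to be $\QDist$ (which is legitimate even though $\QDist$ is large, since the construction of $\Arr$, $\ChuCon$, $\BD$ and $\BB$ is purely local). This yields an isomorphism of quantaloids $\BD(\QDist)\cong\BB(\QDist)$, implemented by negation with respect to the cyclic dualizing family above. Composing with the equivalences already stated in \eqref{BQDist-QSup-QClsCloDist}, namely $\BB(\QDist)^{\op}\simeq\QSup\simeq\QClsCloDist^{\op}$, gives
$$\BB(\QDist)\simeq(\QSup)^{\op}\simeq\QClsCloDist,$$
and feeding in the main result Theorem \ref{hKd-hId-equiv}, which provides $\BD(\QDist)\simeq(\QIntDist)_{\so}$, completes the chain.

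There is no genuine obstacle in this argument; the substantive work has already been done in the cited propositions and theorems. The only point requiring a moment of care is to verify that Proposition \ref{Girard-Arr-ChuCon} applies with $\CQ$ replaced by the large quantaloid $\QDist$, but the proof of that proposition is entirely formal (it constructs the isomorphism arrow-by-arrow via the involution $\neg$) and does not use smallness. Hence the theorem follows by composing the four equivalences in the displayed order.
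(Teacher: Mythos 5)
Your proposal is correct and follows exactly the paper's own argument: the theorem is obtained by combining Proposition \ref{QDist-Girard} and Proposition \ref{Girard-Arr-ChuCon} (applied to the large Girard quantaloid $\QDist$) with the equivalences \eqref{BQDist-QSup-QClsCloDist} and Theorem \ref{hKd-hId-equiv}. Your added remark that the $\Arr$/$\ChuCon$ isomorphism is purely formal and does not require smallness is a reasonable point of care but not a departure from the paper's route.
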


As a special case of Theorem \ref{Girard-equiv}, Corollary \ref{Rel-IntRelo-equiv} actually amounts to the following equivalences in the classical case:

\begin{cor} \label{Girard-equiv-classical}
There are equivalences of quantaloids
$$\BD(\Rel)\simeq\BB(\Rel)\simeq\IntRel_{\so}\simeq\ClsCloRel\simeq\Sup.$$
\end{cor}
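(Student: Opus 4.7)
The plan is to read this corollary as the specialization of Theorem \ref{Girard-equiv} to the case $\CQ = {\bf 2}$, supplemented with the classical-to-enriched translation already performed in the previous section and one small observation about self-duality of $\Sup$.

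First, I would verify that the two-element Boolean algebra ${\bf 2}$, viewed as a one-object quantaloid with $\wedge$ as composition, is a Girard quantaloid. Choosing the singleton family $\FD = \{d = 0\}$, one checks that $0 \lda u = u \rda 0$ coincides with Boolean negation (both equal $1$ if $u = 0$ and $0$ if $u = 1$), so $\FD$ is cyclic, and $\neg\neg u = u$ immediately gives the dualizing condition $(d \lda u) \rda d = u = d \lda (u \rda d)$. By Proposition \ref{QDist-Girard}, $\Dist = \CQ\text{-}\Dist$ is then a Girard quantaloid, so Theorem \ref{Girard-equiv} applies and yields
$$\BD(\Dist) \simeq \BB(\Dist) \simeq (\IntDist)_{\so} \simeq \ClsCloRel' \simeq (\Sup)^{\op},$$
where $\ClsCloRel'$ denotes the specialization of $\QClsCloDist$ to $\CQ = {\bf 2}$.

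Second, I would translate each term into its classical discrete form. For the left two terms, Proposition \ref{DQDist-DQRel-equiv} already gives $\BD(\Dist) \simeq \BD(\Rel)$, and the analogous argument (embedding every ${\bf 2}$-distributor into its underlying ${\bf 2}$-relation by replacing the preorders on source and target with discrete orders, using graphs and cographs of identity functors) establishes $\BB(\Dist) \simeq \BB(\Rel)$. Similarly, Proposition \ref{QIntDisto-QIntRelo-equiv} identifies $(\IntDist)_{\so}$ with $\IntRel_{\so}$, and the analogous discretization argument for closure spaces identifies the ${\bf 2}$-version of $\QClsCloDist$ with the classical $\ClsCloRel$. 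These have already been recorded via Corollaries \ref{DDist-IntDisto-equiv} and \ref{Rel-IntRelo-equiv}, so no new work is required here.

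Finally, I would address the apparent mismatch between $(\Sup)^{\op}$ in Theorem \ref{Girard-equiv} and the plain $\Sup$ appearing in the statement. The remedy is the classical self-duality of $\Sup$: sending each complete lattice $L$ to its opposite $L^{\op}$ and each sup-preserving map $f \colon L \to M$ to its right adjoint $g \colon M \to L$ (which, viewed as a map $M^{\op} \to L^{\op}$, preserves suprema since infima in $M$ are suprema in $M^{\op}$) is a quantaloid isomorphism $\Sup \cong \Sup^{\op}$. Composing this with the chain of equivalences above finishes the proof.

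The only real content beyond ``apply the theorem'' is the Girard verification for ${\bf 2}$ and the self-duality step for $\Sup$; everything else is bookkeeping using results established earlier in the paper. The one place I would have to be slightly careful is checking that $\QClsCloDist$ restricts to $\ClsCloRel$ in the classical case, but this parallels Propositions \ref{DQDist-DQRel-equiv} and \ref{QIntDisto-QIntRelo-equiv} line-by-line and presents no genuine obstacle.
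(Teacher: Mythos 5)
Your proposal is correct and follows essentially the same route as the paper: the corollary is obtained by specializing Theorem \ref{Girard-equiv} to the Girard quantaloid $\CQ={\bf 2}$, discretizing each term as in Section \ref{Q-IntRel} (the paper simply cites the previously established classical equivalences $\BB(\Rel)\simeq\Sup$ and $\ClsCloRel\simeq\Sup$ rather than re-running the discretization for those two terms), and invoking the self-duality of $\Sup$ to replace $(\Sup)^{\op}$ by $\Sup$. Your explicit verification that ${\bf 2}$ is Girard with dualizing element $0$ and your treatment of the $\Sup\cong\Sup^{\op}$ step are exactly the details the paper leaves implicit.
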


\begin{rem}
The equivalences $\BB(\Rel)\simeq\Sup$ and $\ClsCloRel\simeq\Sup$ in Corollary \ref{Girard-equiv-classical} have appeared in \cite[Corollary 3.4.5]{Shen2016a} and \cite[Corollary 4.4.3]{Shen2016b}, respectively, where $\ClsCloRel$ is the quantaloid of (classical) closure spaces (i.e., sets $X$ equipped with a closure operator on its powerset ${\bf 2}^X$) and closed continuous relations, and the self-duality of the quantaloid $\Sup$ of complete lattices and join-preserving maps is applied here.
\end{rem}

\section*{Acknowledgement}

This work is supported by National Natural Science Foundation of China (No. 11701396). The author would like to thank Professor Hongliang Lai for helpful discussions. The author would also like to thank the anonymous referee for several helpful remarks.

%% The Appendices part is started with the command \appendix;
%% appendix sections are then done as normal sections
%% \appendix

%% \section{}
%% \label{}

%\section*{\refname}
%\addcontentsline{toc}{section}{\refname}

%\bibliographystyle{abbrv}
%\bibliography{lili}

\end{document}